\newtheorem{theorem}{Theorem}[section]
\newtheorem{lemma}[theorem]{Lemma}
\newtheorem{proposition}[theorem]{Proposition}
\newtheorem{corollary}[theorem]{Corollary}
\theoremstyle{definition}
\newtheorem{example}[theorem]{Example}
\newtheorem{remark}[theorem]{Remark}
\begin{document}

\begin{center}
\bigskip {\huge On} {\huge the Dec group of finite abelian Galois extensions
over global fields}.

\bigskip Jean B. Nganou\footnote{%
The results of this paper will be included in the Author PhD's dissertation.
The author is very thankful to his advisor Dr. Morandi for his guidance. The
author would also like to thank his Mathematical grandfather Dr. A.
Wadsworth for his suggestions on improving the results of this paper.}

New Mexico State University

Department of Mathematical Sciences

P.O. Box 30001, Department 3MB

Las Cruces, New Mexico 88003-8001

Email: jganou@nmsu.edu

\bigskip

\bigskip

\bigskip

\bigskip
\end{center}

\textbf{Abstract:} If $K/F$ is a finite abelian Galois extension of global
fields whose Galois group has exponent $t$, we prove that there exists a
short exact sequence $0\longrightarrow \func{Dec}(K/F)\longrightarrow \func{%
Br}_{t}(K/F)\longrightarrow \underset{q\in \mathcal{P}}{\oplus }r_{q}%
\mathbb{Z}
/%
\mathbb{Z}
\longrightarrow 0$ where $r_{q}\in 
\mathbb{Q}
$ and $\mathcal{P}$ is a finite set of primes of $F$ that is empty if $t$ is
square free. In particular, we obtain that if $t$ is square free, then $%
\func{Dec}(K/F)=\func{Br}_{t}(K/F)$ which we use to show that prime exponent
division algebras over Henselian valued fields with global residue fields
are isomorphic to a tensor product of cyclic algebras. Finally, we construct
a counterexample to the result for higher exponent division algebras.

\bigskip

\textbf{Introduction:} For any field $F$, $\func{Br}(F)$ denotes the Brauer
group of F and for any positive integer $n,$\ $\func{Br}_{n}(F)$ denotes the
subgroup of $\func{Br}(F)$ whose elements are the the $F$-central division
algebras whose exponent divides $n$. If $K/F$ is any field extension of $F$, 
$\func{Br}(K/F)$ denotes the kernel of the restriction map $\func{Br}%
(F)\rightarrow \func{Br}(K),$ and $\func{Br}_{n}(K/F)=\func{Br}(K/F)\cap 
\func{Br}_{n}(F).$ In addition the Dec group of $K/F$ denoted by $\func{Dec}%
(K/F)$ is the subgroup of $\func{Br}(K/F)$ generated by $\cup \func{Br}(L/F)$
\ where the union ranges over the cyclic subextensions $L/F$ of $K/F.$

If $F$ is a field containing a primitive $n^{th}$ root of unity $\omega $
and $a,b\in F^{\ast }$. $A_{\omega }(a,b)$ will denote the symbol algebra
over $F$ with generators $i,j$ satisfying $i^{n}=a,$ $j^{n}=b,$ $ij=\omega
ji.$ Furthermore, when there is no risk of confusion, $A_{-1}(a,b)$ will be
denoted by $(a,b).$

In addition assume that $F$ contains a primitive $n^{th}$ root of unity with 
$n=\exp (D),$ then it is also known that if $l(D)$ denotes the minimum
number of symbols required to represent the Brauer class of $D$ (recall that
such an integer exists by the Merkurjev-Suslin Theorem), then $n(D)\leq
l(D). $ Also, numerous examples of division algebras of prime exponent have
been constructed by several authors with $n(D)<l(D)$, [see e.g., Ja, Sa, Ti$%
_{3}$]$.$Note that indecomposable division algebras satisfy the inequality.
At the heart of many of these examples, lies the obstruction between the
relative Brauer group and the Dec group. The aim of this paper is to study
this obstruction for global fields. If $K/F$ is a finite abelian Galois
extension of global fields whose Galois group has exponent $t$, we define
what is called a bad prime of $F,$ then prove that there exists a short
exact sequence $0\longrightarrow \func{Dec}(K/F)\longrightarrow \func{Br}%
_{t}(K/F)\longrightarrow \underset{\text{bad }q^{\prime }s}{\oplus }\frac{%
c_{q}}{d_{q}}%
\mathbb{Z}
/%
\mathbb{Z}
\longrightarrow 0$ where $c_{q}$ and $d_{q}$ are positive integers with $%
c_{q}$\TEXTsymbol{\vert}$d_{q}.$ Observing that $F$ has no bad primes when $%
t $ is square free, we deduce that\ $\func{Dec}(K/F)=\func{Br}_{t}(K/F)$
whenever $t$ is square free. Note that the special case ($t=2$)\ of this
result was proved using quadratic form theory [ELTW, Corollary 3.18]. We use
this result to show that if $L$ is a Henselian valued field containing a
primitive $p^{th}$ root of unity whose residue field is a global field, then
any exponent $p$ division algebra over $L$ is isomorphic to a tensor product
of degree $p$ symbol algebras. The last part of this paper is devoted to
constructing a counterexample that shows that non square free exponent
division algebras over the class of fields studied may not be isomorphic to
a product of symbol algebras. In fact, we construct an example of an
exponent 4 division algebra over $%
\mathbb{Q}
(i)((x))((y))$ that is not isomorphic to a product of symbol algebras of
exponent dividing 4.

If $F$ is a global field, $P$ a prime of $F$ and $D$ is an $F$-algebra $%
r_{F_{P}/F}(D)\doteqdot D\otimes _{F}F_{P}$ will be denoted by $D_{P}.$

\section{The Dec group of finite abelian Galois extensions over global fields%
}

We start this section by the following easy lemma.

\begin{lemma}
\label{decin}Let $K/F$ a finite abelian Galois extension with galois group $%
G $. If $t$ denotes the exponent of $G$, then $\func{Dec}(K/F)\subseteq 
\func{Br}_{t}(K/F)$
\end{lemma}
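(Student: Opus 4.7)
The plan is to reduce to the generators of $\func{Dec}(K/F)$ and verify the containment on each. By definition, $\func{Dec}(K/F)$ is the subgroup of $\func{Br}(K/F)$ generated by $\bigcup \func{Br}(L/F)$ as $L$ runs over cyclic subextensions of $K/F$. Since $\func{Br}_{t}(K/F)$ is itself a subgroup of $\func{Br}(F)$, it suffices to show that each generating piece $\func{Br}(L/F)$ sits inside $\func{Br}_{t}(K/F)$; the group-theoretic closure is then automatic.

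So fix a cyclic subextension $L/F$ of $K/F$. First, since $L \subseteq K$, any $F$-central simple algebra split by $L$ is \emph{a fortiori} split by $K$, so $\func{Br}(L/F) \subseteq \func{Br}(K/F)$. It remains to see that elements of $\func{Br}(L/F)$ have exponent dividing $t$. For this I would use two standard facts: (i) $\func{Gal}(L/F) \cong G/\func{Gal}(K/L)$ is a quotient of $G$, so its exponent divides $t$, and being cyclic, its order equals its exponent; hence $[L:F]$ divides $t$. (ii) For a cyclic Galois extension $L/F$ of degree $n$, the relative Brauer group $\func{Br}(L/F)$ is $n$-torsion (e.g.\ because every class in $\func{Br}(L/F)$ is represented by a cyclic algebra $(L/F,\sigma,a)$ of degree $n$, whose $n$-th tensor power is split). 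Combining (i) and (ii), $\func{Br}(L/F) \subseteq \func{Br}_{[L:F]}(F) \subseteq \func{Br}_{t}(F)$.

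Putting the two inclusions together gives $\func{Br}(L/F) \subseteq \func{Br}(K/F) \cap \func{Br}_{t}(F) = \func{Br}_{t}(K/F)$ for every cyclic subextension $L/F$, and therefore $\func{Dec}(K/F) \subseteq \func{Br}_{t}(K/F)$. There is no real obstacle here; the only nontrivial input is the $n$-torsion property of $\func{Br}(L/F)$ for cyclic $L/F$ of degree $n$, which is classical and can be cited. This matches the paper's characterization of the lemma as \textquotedblleft easy.\textquotedblright
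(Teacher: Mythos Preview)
Your proof is correct and follows essentially the same approach as the paper: reduce to the generators $\func{Br}(L/F)$, use that $\mathcal{G}(L/F)$ is a cyclic quotient of $G$ so $[L:F]\mid t$, and then observe that classes in $\func{Br}(L/F)$ have exponent dividing $[L:F]$. The only cosmetic difference is that the paper gets the last step via $\exp(D)\mid\func{Ind}(D)\mid[L:F]$, whereas you invoke the cyclic-algebra description; both are standard and equivalent here.
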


\begin{proof}
Since $\func{Dec}(K/F)$ is generated by the relative Brauer groups $\func{Br}%
(L/F)$ where $L/F$ runs over cyclic subextensions of $K/F,$ it is enough to
show that $\func{Br}(L/F)\subseteq \func{Br}_{t}(K/F)$ for every $L/F$
cyclic subextensions of $K/F.$ Note that for any such cyclic extension $L/F,$
as $\mathcal{G(}L/F\mathcal{)}$ is a quotient group of $G,$ then $[L:F]=|%
\mathcal{G(}L/F\mathcal{)}|=\exp (\mathcal{G(}L/F\mathcal{)})$ divides $\exp
(G)=t.$ Therefore if $D\in \func{Br}(L/F),$ then $\exp (D)$ divides $\func{%
Ind}(D)$ which divides $[L:F]$, hence $\exp (D)$ divides $t.$ Thus for every
cyclic subextension $L/F$ of $K/F,$ $\func{Br}(L/F)\subseteq \func{Br}%
_{t}(K/F)$, therefore $\func{Dec}(K/F)\subseteq \func{Br}_{t}(K/F).$
\end{proof}

For local fields, the inclusion above is an equality as we now show.

\begin{proposition}
\label{decloc}Let $F=%
\mathbb{R}
,%
\mathbb{C}
,$ or a local field, $K/F$ a finite abelian Galois extension whose Galois
group has exponent $s.$ Then $\func{Dec}(K/F)=\func{Br}_{s}(K/F).$
\end{proposition}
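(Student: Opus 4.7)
The plan is to combine Lemma \ref{decin}, which already supplies $\func{Dec}(K/F) \subseteq \func{Br}_{s}(K/F)$, with local class field theory for the reverse inclusion. The cases $F = \mathbb{C}$ and $F = \mathbb{R}$ are essentially degenerate: $\func{Br}(\mathbb{C}) = 0$, and the only nontrivial finite abelian extension of $\mathbb{R}$ is $\mathbb{C}/\mathbb{R}$, which is already cyclic, so both $\func{Dec}(K/F)$ and $\func{Br}_{s}(K/F)$ coincide with $\func{Br}(K/F)$ in that case and there is nothing to prove.

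For $F$ a local field with $n = [K:F]$, my first step is to invoke the invariant isomorphism $\func{Br}(F) \cong \mathbb{Q}/\mathbb{Z}$ together with the standard fact that, for any finite Galois extension of local fields, $\func{Br}(K/F)$ is identified with $\frac{1}{n}\mathbb{Z}/\mathbb{Z}$. Intersecting this with the $s$-torsion of $\mathbb{Q}/\mathbb{Z}$ and using that $s \mid n$, I would then check the easy equality $\func{Br}_{s}(K/F) = \frac{1}{s}\mathbb{Z}/\mathbb{Z}$.

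Next I would produce a cyclic subextension $L/F$ of $K/F$ of degree exactly $s$. Writing the Galois group in invariant factor form $\mathcal{G}(K/F) \cong \mathbb{Z}/n_{1} \oplus \cdots \oplus \mathbb{Z}/n_{k}$ with $n_{1} \mid \cdots \mid n_{k} = s$, I let $H$ be the kernel of the projection onto the last factor and set $L = K^{H}$; then $\mathcal{G}(L/F) \cong \mathbb{Z}/s$ is cyclic. By the identification of the previous step applied now to $L/F$, we get $\func{Br}(L/F) = \frac{1}{s}\mathbb{Z}/\mathbb{Z} = \func{Br}_{s}(K/F)$, and $\func{Br}(L/F) \subseteq \func{Dec}(K/F)$ holds by the very definition of $\func{Dec}$.

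The only mild obstacle is verifying that the exponent $s$ is actually attained as the degree of some cyclic subextension of $K/F$, which is precisely what the invariant factor decomposition gives. Everything else is bookkeeping inside $\mathbb{Q}/\mathbb{Z}$ through the invariant map, so once this step is secured the proof falls out cleanly.
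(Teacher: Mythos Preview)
Your proposal is correct and follows essentially the same route as the paper: dispose of the archimedean cases trivially, then in the local case use the invariant factor decomposition of $\mathcal{G}(K/F)$ to exhibit a cyclic subextension $L/F$ of degree $s$ and observe that it already splits everything in $\func{Br}_{s}(K/F)$. The only cosmetic difference is that you phrase the last step via the identification $\func{Br}(L/F)=\frac{1}{s}\mathbb{Z}/\mathbb{Z}$ inside $\mathbb{Q}/\mathbb{Z}$, whereas the paper argues directly that $\func{Ind}(D)=\exp(D)\mid s=[L:F]$ forces $D\in\func{Br}(L/F)$; these are the same fact from local class field theory.
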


\begin{proof}
We only need to prove that $\func{Br}_{s}(K/F)\subseteq \func{Dec}(K/F).$ If 
$F=%
\mathbb{R}
$ or $%
\mathbb{C}
,$ then $K=%
\mathbb{R}
$ or $%
\mathbb{C}
,$ and in either case the inclusion is obvious. Suppose that $F$ is a local
field and let $D\in $ $\func{Br}_{s}(K/F).$ Note that there exists a cyclic
subextension $L/F$ of $K/F$ of degree $s$ (for example, if $\mathcal{G}(K/F)=%
\mathbb{Z}
_{s}\times 
\mathbb{Z}
_{t_{2}}\times ...\times 
\mathbb{Z}
_{t_{m}},$ take $L=\mathcal{F}(%
\mathbb{Z}
_{t_{2}}\times ...\times 
\mathbb{Z}
_{t_{m}})$). Therefore $\func{Ind}(D)=\exp (D)$ divides $t=[L:F],$ hence by
[Re, Corollary 31.10], $D\in \func{Br}(L/F)\subseteq \func{Dec}(K/F),$ thus $%
D\in \func{Dec}(K/F)$ as needed.
\end{proof}

\begin{lemma}
\label{arch}Let $F$ a global field, $P$ a non-archimedian prime of $F$, $%
0\leq r\leq n,$ and let $L/F$ be a cyclic extension such that $%
[L_{P}:F_{P}]=n.$ If $\sigma $ denotes a generator for $\mathcal{G}%
(L_{P}/F_{P}),$ then there exists $b\in F_{P}$ so that $\func{Inv}%
_{P}((L_{P}/F_{P},\sigma ,b))=\frac{r}{n}+%
\mathbb{Z}
.$
\end{lemma}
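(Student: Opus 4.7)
My plan is to derive the existence of $b$ as an immediate consequence of local class field theory applied to the local field $F_{P}$, combined with the cohomological description of cyclic algebras. The core observation is that the map $b \mapsto \func{Inv}_{P}((L_{P}/F_{P}, \sigma, b))$ from $F_{P}^{\ast}$ to $\tfrac{1}{n}\mathbb{Z}/\mathbb{Z}$ is surjective; once this is known, any prescribed value $r/n + \mathbb{Z}$ with $0 \le r \le n$ is automatically attained.

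To establish surjectivity, I would invoke two standard facts. On the one hand, local class field theory gives the invariant isomorphism $\func{Inv}_{P}: \func{Br}(F_{P}) \xrightarrow{\sim} \mathbb{Q}/\mathbb{Z}$, which restricts to a bijection between $\func{Br}(L_{P}/F_{P})$ and the cyclic subgroup $\tfrac{1}{n}\mathbb{Z}/\mathbb{Z}$ since $[L_{P}:F_{P}]=n$. On the other hand, because $\mathcal{G}(L_{P}/F_{P}) = \langle \sigma \rangle$ is cyclic, the periodicity of Tate cohomology identifies $\func{Br}(L_{P}/F_{P}) = H^{2}(\mathcal{G}(L_{P}/F_{P}), L_{P}^{\ast})$ with $\hat{H}^{0}(\mathcal{G}(L_{P}/F_{P}), L_{P}^{\ast}) = F_{P}^{\ast}/N_{L_{P}/F_{P}}(L_{P}^{\ast})$, and under this identification the coset of $b$ corresponds precisely to the Brauer class of the crossed product $(L_{P}/F_{P}, \sigma, b)$. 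Equivalently, every element of $\func{Br}(L_{P}/F_{P})$ is represented by a cyclic algebra $(L_{P}/F_{P}, \sigma, b)$ for some $b \in F_{P}^{\ast}$, so the map $b \mapsto [(L_{P}/F_{P}, \sigma, b)]$ from $F_{P}^{\ast}$ onto $\func{Br}(L_{P}/F_{P})$ is surjective.

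Composing the two surjections yields a surjective homomorphism $F_{P}^{\ast} \to \tfrac{1}{n}\mathbb{Z}/\mathbb{Z}$ given by $b \mapsto \func{Inv}_{P}((L_{P}/F_{P}, \sigma, b))$, and picking a preimage of $r/n + \mathbb{Z}$ furnishes the required $b$. I do not anticipate any serious obstacle; the lemma is essentially bookkeeping once local class field theory is invoked. The one cautionary point is the dependence on the choice of generator $\sigma$: replacing $\sigma$ by $\sigma^{k}$ with $\gcd(k,n)=1$ merely permutes which $b$ realizes which invariant, without affecting surjectivity of the map onto $\tfrac{1}{n}\mathbb{Z}/\mathbb{Z}$, so the conclusion is insensitive to that choice.
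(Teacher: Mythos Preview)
Your proposal is correct and follows essentially the same route as the paper: both use the invariant isomorphism $\func{Inv}_{P}:\func{Br}(F_{P})\to\mathbb{Q}/\mathbb{Z}$ to locate a class in $\func{Br}(L_{P}/F_{P})$ with the desired invariant, and then use that every class in the relative Brauer group of a cyclic extension is represented by a cyclic algebra $(L_{P}/F_{P},\sigma,b)$. The only cosmetic difference is that the paper argues the last step by noting $\exp(A)=\func{Ind}(A)\mid n=[L_{P}:F_{P}]$ and citing Draxl, whereas you invoke the periodicity isomorphism $H^{2}\cong\hat{H}^{0}$ directly; these are two phrasings of the same standard fact.
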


\begin{proof}
Recall (see, e.g., [Re,]) that $\func{Inv}_{P}:\func{Br}(F_{P})\rightarrow 
\mathbb{Q}
/%
\mathbb{Z}
$ is an isomorphism, therefore there exists $A\in \func{Br}(F)$ so that $%
\func{Inv}_{P}(A)=\frac{r}{n}+%
\mathbb{Z}
.$ But this implies that $\func{Inv}_{P}(A^{n})=%
\mathbb{Z}
,$ therefore $A^{n}\sim 1,$ hence $\func{Ind}(A)=\exp (A)$ divides $%
n=[L_{P}:F_{P}].$ Thus $A\in \func{Br}(L_{P}/F_{P}),$ and by [Dr, Theorem 1
pp.71], there exists $b\in F_{P}$ so that $A\sim (L_{P}/F_{P},\sigma ,b),$
therefore $\func{Inv}_{P}((L_{P}/F_{P},\sigma ,b))=\func{Inv}_{P}(A)=\frac{r%
}{n}+%
\mathbb{Z}
.$
\end{proof}

We set up the notations for the remaining of this section. $F$ is a global
field and $K/F$ is a finite abelian Galois extension with Galois group $G$
of exponent $t$. For any prime $q$ of $F$, $G_{q}^{K/F}$ denotes the local
Galois group $\mathcal{G}(K_{q}/F_{q})$ which we will simply denote by $%
G_{q} $ when there is no risk of consfusion. Also denote by $c_{q}$ the
exponent of $G_{q}$ and $d_{q}:=\gcd (t,|G_{q}|).$ Note that as $G_{q}$ is
isomorphic to a subgroup of $G$, then $\exp (G_{q})$ divides $\exp (G),$ but
also $\exp (G_{q})$ divides $|G_{q}|,$ hence $c_{q}$ divides $d_{q}.$ We
call a prime $q $ bad if $c_{q}<d_{q}.$

\begin{remark}
\label{finbad}If $q$ is a bad prime of $F,$ then $G_{q}$ is not cyclic,
therefore $q$ is ramified because if $q$ is unramified, then $G_{q}\cong 
\mathcal{G}(\overline{K}/\overline{F})$ which is cyclic. But since there are
only finitely many ramified primes, it follows that there are only finitely
many bad primes.
\end{remark}

We need the following group theoretic results in Proposition \ref{tchebo}
below.

\begin{proposition}
\label{grpth1}Let $G$ a finite abelian group of exponent $t$. Then there
exist an integer $n\geq 1,$ a family $\{H_{i}\}_{i=1}^{n}$ of cyclic
subgroups of $G$ of order $t,$ and a subgroup $D$ of $G$ such that $G/D$ is
cyclic of order $t$ satisfying $H_{1}\cdot H_{2}\cdots H_{n}=G$ and $%
H_{i}\cdot D=G$ for all $i.$
\end{proposition}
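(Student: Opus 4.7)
The plan is to build the data $(H_1,\ldots,H_n,D)$ explicitly from the invariant-factor decomposition of $G$. First I would write
\[
G \cong \mathbb{Z}/t_1\mathbb{Z}\times\cdots\times\mathbb{Z}/t_m\mathbb{Z},\qquad t_1\mid t_2\mid\cdots\mid t_m=t,
\]
so that the last cyclic factor actually realizes the exponent, and let $e_1,\ldots,e_m$ denote the standard generators.

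Next I would define $D$ to be the ``kill the last coordinate'' subgroup, namely $D=\mathbb{Z}/t_1\mathbb{Z}\times\cdots\times\mathbb{Z}/t_{m-1}\mathbb{Z}\times\{0\}$. Projection onto the last factor then identifies $G/D$ with $\mathbb{Z}/t_m\mathbb{Z}=\mathbb{Z}/t\mathbb{Z}$, which is cyclic of order $t$ as required.

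For the cyclic subgroups I would take $n=m$, set $H_m=\langle e_m\rangle$, and set $H_i=\langle e_i+e_m\rangle$ for $1\le i\le m-1$. Since $t_i\mid t_m=t$, each chosen generator has order $\operatorname{lcm}(t_i,t)=t$, so every $H_i$ is cyclic of order $t$. Each such generator also projects to a generator of $G/D$, so $H_i\cdot D=G$ holds for all $i$. Finally $e_m\in H_m$ and $(e_i+e_m)-e_m=e_i\in H_i\cdot H_m$, so the product $H_1\cdot H_2\cdots H_m$ contains every standard generator of $G$ and hence equals $G$.

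There is really no ``hard step'' here; the only thing worth flagging is that one must use the invariant-factor form of $G$ rather than the elementary-divisor form, so that a single direct summand already has order equal to the full exponent $t$. Once that choice is made, every property demanded by the statement follows by a one-line verification, and this is the form in which the proposition will be convenient to apply in the Chebotarev-type argument of Proposition \ref{tchebo}.
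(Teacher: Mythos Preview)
Your proof is correct and essentially identical to the paper's own argument: the paper also fixes an invariant-factor style decomposition (placing the $\mathbb{Z}/t\mathbb{Z}$ factor first rather than last), takes $D$ to be the product of the remaining factors, sets one $H_i$ equal to the $\mathbb{Z}/t\mathbb{Z}$ summand, and sets the others equal to $\langle\sigma\sigma_i^{-1}\rangle$, which is your $\langle e_i+e_m\rangle$ up to notation and a sign. The verifications are the same one-line checks you give.
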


\begin{proof}
Write $G=%
\mathbb{Z}
_{t}\times 
\mathbb{Z}
_{t_{2}}\times ...\times 
\mathbb{Z}
_{t_{n}}$ with $t_{i}|t$ for all $i$ and set $D=%
\mathbb{Z}
_{t_{2}}\times ...\times 
\mathbb{Z}
_{t_{n}},$ then $G/D$ is cyclic of order $t.$ Let $\sigma ,\sigma
_{2},...,\sigma _{n}$ be the generators of $%
\mathbb{Z}
_{t},%
\mathbb{Z}
_{t_{2}},...,%
\mathbb{Z}
_{t_{n}}$ respectively. For each $i=2,...,n$, set $H_{i}=\langle \sigma
\sigma _{i}^{-1}\rangle $ and $H_{1}=%
\mathbb{Z}
_{t}.$To see that all the required properties are satisfied, first let $%
H=H_{1}\cdot H_{2}\cdots H_{n}$, then $H$ is a subgroup of $G$ containing $%
\sigma $ and $\sigma \sigma _{i}^{-1}$ for all $i,$ hence $H$ contains $%
\sigma $ and $\sigma _{i}$ for all $i,$ therefore $H=G.$ In addition, it is
clear that $H_{1}\cdot D=G,$ and for each $i\geqslant 2,$ $H_{i}\cdot D$
contains $\sigma \sigma _{i}^{-1}$ and $\sigma _{j}$ for all $j\geqslant 2,$
thus $H_{i}\cdot D$ contains $\sigma $ and $\sigma _{j}$ for all $j\geqslant
2,$ therefore $H_{i}\cdot D=G$ for all $i.$
\end{proof}

Dualizing the previous Proposition, we get:

\begin{corollary}
\label{grpth2}Let $G$ a finite abelian group of exponent $t$. Then there
exist an integer $n\geq 1,$ a family $\{H_{i}\}_{i=1}^{n}$ of subgroups of $%
G $ \ with $G/H_{i}$ cyclic of order $t$ for each $i,$ and there exists a
cyclic subgroup $D$ of $G$ of order $t$ satisfying $H_{1}\cap H_{2}\cap
\cdots \cap H_{n}=(1)$ and $H_{i}\cap D=(1)$ for all $i.$
\end{corollary}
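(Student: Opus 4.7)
The plan is to derive this statement from Proposition \ref{grpth1} via Pontryagin duality for finite abelian groups. Let $\hat{G}=\mathrm{Hom}(G,\mathbb{Q}/\mathbb{Z})$; then $\hat{G}$ is (non-canonically) isomorphic to $G$, and in particular has the same exponent $t$. For each subgroup $H\leq G$ set $H^{\perp}=\{\chi\in\hat{G}\colon \chi(H)=0\}$. The classical duality dictionary provides the isomorphisms $H^{\perp}\cong\widehat{G/H}$ and $\hat{G}/H^{\perp}\cong\hat{H}$, the inversion formulas $(H_{1}\cdot H_{2})^{\perp}=H_{1}^{\perp}\cap H_{2}^{\perp}$ and $(H_{1}\cap H_{2})^{\perp}=H_{1}^{\perp}\cdot H_{2}^{\perp}$, and biduality $H^{\perp\perp}=H$ under the canonical identification $G\cong\hat{\hat{G}}$.

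First I would apply Proposition \ref{grpth1} to $\hat{G}$, obtaining an integer $n\geq 1$, cyclic subgroups $K_{1},\ldots,K_{n}$ of $\hat{G}$ of order $t$, and a subgroup $E\leq\hat{G}$ with $\hat{G}/E$ cyclic of order $t$, such that $K_{1}\cdots K_{n}=\hat{G}$ and $K_{i}\cdot E=\hat{G}$ for every $i$. Then I would set $H_{i}:=K_{i}^{\perp}\leq G$ and $D:=E^{\perp}\leq G$ and read off the required properties from the dictionary: $G/H_{i}\cong\hat{K_{i}}$ is cyclic of order $t$ since $K_{i}$ is, and $D=E^{\perp}\cong\widehat{\hat{G}/E}$ is cyclic of order $t$ since $\hat{G}/E$ is. The intersection conditions follow from $H_{1}\cap\cdots\cap H_{n}=(K_{1}\cdots K_{n})^{\perp}=\hat{G}^{\perp}=(1)$ and $H_{i}\cap D=(K_{i}\cdot E)^{\perp}=\hat{G}^{\perp}=(1)$.

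The main obstacle is essentially cosmetic: one just needs to set up the duality dictionary carefully so that cyclic-of-order-$t$ subgroups of $\hat{G}$ translate to subgroups of $G$ with cyclic-of-order-$t$ quotient, and vice versa, which is exactly what is needed. Once that dictionary is in place, the statement is a direct transcription of Proposition \ref{grpth1}. An alternative approach would be to imitate the construction of that proposition directly inside $G=\mathbb{Z}_{t}\times\mathbb{Z}_{t_{2}}\times\cdots\times\mathbb{Z}_{t_{n}}$ by taking $D$ to be the first cyclic factor and each $H_{i}$ to be the kernel of an appropriate surjection onto $\mathbb{Z}_{t}$, but the duality route is cleaner and matches the author's phrasing ``dualizing.''
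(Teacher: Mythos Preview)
Your proposal is correct and takes essentially the same approach as the paper: both dualize Proposition~\ref{grpth1} via the perp correspondence $H\mapsto H^{\perp}$ on the character group, using $(H_{1}\cdots H_{n})^{\perp}=\bigcap_i H_{i}^{\perp}$ and $\widehat{G}/H^{\perp}\cong\widehat{H}$. The only cosmetic difference is that the paper applies Proposition~\ref{grpth1} to $G$ and perps into $\widehat{G}$ (then invokes the non-canonical isomorphism $G\cong\widehat{G}$ to finish), whereas you apply it to $\widehat{G}$ and perp directly back into $G$.
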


\begin{proof}
Since $G\cong \widehat{G}(:=\limfunc{Hom}_{%
\mathbb{Z}
}(G,%
\mathbb{Q}
/%
\mathbb{Z}
))$, it is enough to prove the result for $\widehat{G}.$ A way to justify
the previous isomorphism is to observe that the isomorphism is trivial for
cyclic groups, and appeal to the fundamental theorem of finite abelian
groups together with the fact that $Hom_{%
\mathbb{Z}
}(-,%
\mathbb{Q}
/%
\mathbb{Z}
)$ preserves direct products. Now note that $G$ has subgroups $%
\{H_{i}\}_{i=1}^{n}$ and $D$ as in Proposition \ref{grpth1}. For each $%
i=1,...,n,$ set $K_{i}=H_{i}^{\perp }$ and $D^{\prime }=D^{\perp }$ (recall
that if $H$ is a subgroup of $G$, then $H^{\perp }:=\{f:G\rightarrow 
\mathbb{Q}
/%
\mathbb{Z}
;H\subseteq \ker f\}$ is a subgroup of $\widehat{G}$). On the other hand,
for each $i,$ $\widehat{G}/H_{i}^{\perp }\cong \widehat{H_{i}}\cong H_{i}$,
so for each $i,$ $\widehat{G}/K_{i}$ is cyclic of order $t.$ In addition,
for each $i$, $D^{\perp }\cap H_{i}^{\perp }=(D\cdot H_{i})^{\perp
}=G^{\perp }=(1),$ and $\overset{n}{\underset{i=1}{\dbigcap }}H_{i}^{\perp
}=(H_{1}\cdot H_{2}\cdots H_{n})^{\perp }=G^{\perp }=(1),$ therefore $%
\overset{n}{\underset{i=1}{\dbigcap }}K_{i}=(1)$ and $K_{i}\cap D^{\prime
}=(1)$ for each $i$ as needed.
\end{proof}

Corollary \ref{grpth2} and the Tchebotarev Density Theorem now yield:

\begin{proposition}
\label{tchebo}Let $F$ a global field and $K/F$ a finite abelian Galois
extension with Galois group $G$ of exponent $t$. Then there exists cyclic
subextensions $\{L_{i}/F\}_{i=1}^{n}$ of $K/F$ of degree $t$ so that $%
K=L_{1}...L_{n}$ and there exists infinitely many $P$ of $F$ so that $%
[L_{iP}:F_{P}]=t$ for all $i.$
\end{proposition}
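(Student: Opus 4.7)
The plan is to combine the group-theoretic Corollary \ref{grpth2} with the Galois correspondence and the Tchebotarev Density Theorem.

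First, I would apply Corollary \ref{grpth2} to $G$ to obtain subgroups $H_1,\dots,H_n$ of $G$ with $G/H_i$ cyclic of order $t$ for each $i$, together with a cyclic subgroup $D \le G$ of order $t$, such that $H_1\cap\cdots\cap H_n = (1)$ and $H_i\cap D = (1)$ for all $i$. Via the Galois correspondence applied to the abelian extension $K/F$, define $L_i := K^{H_i}$. Then $\mathcal{G}(L_i/F) \cong G/H_i$ is cyclic of order $t$, so each $L_i/F$ is a cyclic subextension of $K/F$ of degree $t$. Moreover, the compositum $L_1\cdots L_n$ corresponds to the subgroup $H_1\cap\cdots\cap H_n = (1)$, hence $L_1\cdots L_n = K^{(1)} = K$, giving the first assertion.

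For the second assertion, I would invoke the Tchebotarev Density Theorem. Since $K/F$ is abelian, for any element $g \in G$ the set of primes $P$ of $F$ that are unramified in $K/F$ and whose Frobenius equals $g$ has positive density $1/|G|$, and in particular is infinite. Applying this to a fixed generator $g$ of the cyclic group $D$, we obtain infinitely many unramified primes $P$ of $F$ for which the decomposition group $G_P = \mathcal{G}(K_P/F_P)$ is generated by $\operatorname{Frob}_P = g$, hence $G_P = D$.

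For such a prime $P$, the local extension $L_{iP}/F_P$ has Galois group equal to the image of $G_P$ under the quotient map $G \twoheadrightarrow G/H_i$, which is $G_P H_i / H_i \cong G_P/(G_P\cap H_i) = D/(D\cap H_i) = D/(1) \cong D$. Therefore $[L_{iP}:F_P] = |D| = t$ for every $i$, completing the proof.

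The main obstacle is simply the bookkeeping translating between $G$, its subgroups $H_i, D$, and the corresponding fields and local extensions; once Corollary \ref{grpth2} is in hand, there is no further structural difficulty, as Tchebotarev delivers the required primes almost for free in the abelian setting.
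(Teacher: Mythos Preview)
Your proof is correct and follows essentially the same approach as the paper: apply Corollary \ref{grpth2}, use the Galois correspondence to produce the $L_i$, and invoke Tchebotarev on a generator of $D$ to obtain the required primes. The only cosmetic difference is that the paper computes $[L_{iP}:F_P]$ via the tower $[K_P:F_P]=[K_P:L_{iP}][L_{iP}:F_P]$ after showing $G_P^{K/L_i}=D\cap H_i=(1)$, whereas you compute $\mathcal{G}(L_{iP}/F_P)\cong D/(D\cap H_i)$ directly; these are equivalent one-line calculations.
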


\begin{proof}
$G$ has subgroups $\{H_{i}\}_{i=1}^{n}$ and $D$ as in Corollary \ref{grpth2}%
. For each \ $i$\ let $L_{i}=\mathcal{F}(H_{i}),$ then $\mathcal{G}%
(L_{i}/F)\cong G/H_{i},$ therefore $L_{i}/F$ is cyclic of degree $t.$ On the
other hand, $\mathcal{G}(K/(L_{1}...L_{n}))\subseteq \mathcal{G}%
(K/L_{i})=H_{i}$ for all $i,$ therefore $\mathcal{G}(K/(L_{1}...L_{n}))%
\subseteq \overset{n}{\underset{i=1}{\cap }}H_{i}=(1),$ hence $\mathcal{G}%
(K/(L_{1}...L_{n}))=(1),$ thus $K=L_{1}...L_{n}.$ Let $\sigma $ be a
generator of $D,$ by the Tchebotarev Density Theorem [Pi, p363], there are
infinitely many primes $P$ of $F$ so that $G_{P}^{K/F}=\langle \sigma
\rangle .$ For each of these primes $P,$and each $i,$ $%
G_{P}^{K/L_{i}}=(G_{P}^{K/F})\cap H_{i}=D\cap H_{i}=(1)$ by Corollary \ref%
{grpth2}$,$therefore for each $i,$ $[K_{P}:L_{iP}]=|G_{P}^{K/L_{i}}|=1.$ In
addition, as $\sigma $ has order $t,$ then $%
t=[K_{P}:F_{P}]=[K_{P}:L_{iP}][L_{iP}:F_{P}]=[L_{iP}:F_{P}],$ thus $%
[L_{iP}:F_{P}]=t$ for all $i$ as needed.
\end{proof}

From now\ and for the rest of this section $F$ will be a global field, $K/F$
a finite abelian Galois extension with cyclic subextensions $%
\{L_{i}/F\}_{i=1}^{n}$ as in Proposition \ref{tchebo}.

\begin{remark}
\label{lcm}Let $G$ be a finite abelian group, $\{H_{i}\}_{i=1}^{n}$
subgroups of $G$ with $\overset{n}{\underset{i=1}{\cap }}H_{i}=(1)$ and $%
G/H_{i}$ cyclic for each $i.$ Then $\exp (G)=\func{lcm}\{|G/H_{i}|\}.$ To
see this, note that since $\overset{n}{\underset{i=1}{\cap }}H_{i}=(1),$
then $\widehat{G}=\langle H_{i}^{\perp }\rangle ,$ hence $\exp (G)=\exp (%
\widehat{G})=\func{lcm}\{\exp (H_{i}^{\perp })\}=\func{lcm}\{|G/H_{i}|\}.$
The last equality holds because for each $i$, $G/H_{i}\cong H_{i}^{\perp }$
and $G/H_{i}$ is cyclic.
\end{remark}

In particular if $q$ is a prime of $F$ and $G_{q}=\mathcal{G}(K_{q}/F_{q}),$
let $H_{i}=\mathcal{G}(K_{q}/L_{iq})$ for each $i$, then $G_{q}/H_{i}\cong 
\mathcal{G}(L_{iq}/F_{q})$ and $G_{q}/H_{i}$ is cyclic for each $i.$ As $%
K=L_{1}...L_{n},$ then $K_{q}=L_{1q}...L_{nq},$ hence $\overset{n}{\underset{%
i=1}{\cap }}H_{i}=(1).$ Therefore, by the above $c_{q}:=\exp (G_{q})=\func{%
lcm}\{|G_{q}/H_{i}|\}=\func{lcm}\{[L_{iq}:F_{q}]\}.$ Thus for each prime $q,$
$c_{q}=\func{lcm}\{[L_{iq}:F_{q}]\}.$

\begin{remark}
\label{tilemme copy(1)}If $K=L_{1}...L_{n}$ as in Proposition \ref{tchebo},
then $\func{Dec}(K/F)=\langle \func{Br}(L_{i}/F):i=1,...n\rangle .$ To see
this, note that in [Ti$_{1},$ Lemme 1.3] all that is needed for the argument
is that the $\chi _{i}$ generate the character group $\chi (K/F)$, and this
is equivalent to the $L_{i}$ generating $K$ as fields which is our
assumption.
\end{remark}

\begin{example}
An example of a finite abelian Galois extension with a bad prime. Consider $%
F=%
\mathbb{Q}
(i),$ $K=F(\sqrt{5},\sqrt[4]{14})$ and $q$ an extension of the $5$ adic
valuation $v_{5}$ on $%
\mathbb{Q}
$ to $F.$ We claim that $G\cong 
\mathbb{Z}
_{4}\times 
\mathbb{Z}
_{2}$ and $G_{q}\cong 
\mathbb{Z}
_{2}\times 
\mathbb{Z}
_{2}$, and this would imply that $q$ is a bad prime of $F$ ($c_{q}=2<4=d_{q}$%
). Using the fact that $7$ is prime in $%
\mathbb{Z}
\lbrack i],$ and the Eisenstein Criterion, one sees that $x^{4}-14$ is
irreducible over $F,$ so [$F(\sqrt[4]{14}):F]=4$ $.$ It is also easy to see
that $\sqrt{5}\notin F(\sqrt[4]{14})$ (this is true just because $\sqrt{5}%
\notin 
\mathbb{Q}
(\sqrt[4]{14})$ )$,$ thus $|G|=[K:F]=8$, in addition since $K$ is the
composite of $F(\sqrt[4]{14})$ and $F(\sqrt{5})$ which are linearly disjoint
over $F,$ then $G:=\mathcal{G}(K/F)=\mathcal{G}(F(\sqrt[4]{14})/F)\oplus 
\mathcal{G}(F(\sqrt{5})/F),$ hence $G\cong 
\mathbb{Z}
_{4}\times 
\mathbb{Z}
_{2}.$ It remains to see that $G_{q}\cong 
\mathbb{Z}
_{2}\times 
\mathbb{Z}
_{2}.$ First we have $K_{q}=F_{q}(\sqrt{5},\sqrt[4]{14}),$ and we assert
that $F_{q}(\sqrt[4]{14})=F_{q}(\sqrt{2}).$ To see the last assertion, note
first that as $v_{5}$ has two extensions to $F$ ($5\equiv 1(\func{mod}4)$),
then $\overline{F}=%
\mathbb{Z}
_{5};$ in addition let $L=F(\sqrt[4]{14}),$ then $\overline{L}=\overline{F}(%
\sqrt[4]{\overline{14}})=%
\mathbb{Z}
_{5}(\sqrt[4]{\overline{2}^{2}})=%
\mathbb{Z}
_{5}(\sqrt{\overline{2}}),$ hence as $L_{q}/F_{q}$ is unramified, $%
[L_{q}:F_{q}]=[\overline{L_{q}}:\overline{F_{q}}]=[\overline{L}:\overline{F}%
]=[%
\mathbb{Z}
_{5}(\sqrt{\overline{2}}):%
\mathbb{Z}
_{5}]=2.$ On the other hand, as $\overline{L_{q}}=\overline{L}=%
\mathbb{Z}
_{5}(\sqrt{\overline{2}}),$ then $x^{2}-2$ has a root in $\overline{L_{q}}$,
therefore by the Hensel Lemma, $\sqrt{2}\in L_{q}=F_{q}(\sqrt[4]{14})$,
hence $F_{q}(\sqrt{2})\subseteq F_{q}(\sqrt[4]{14})$ and by degree count $%
F_{q}(\sqrt{2})=F_{q}(\sqrt[4]{14})$ as claimed. Finally we have $%
K_{q}=F_{q}(\sqrt{5},\sqrt[4]{14})=F_{q}(\sqrt{5},\sqrt{2})$. We show that [$%
K_{q}:F_{q}]=4.$ Note that $\sqrt{\overline{2}}\in \overline{K_{q}},$ so [$%
\overline{K_{q}}:\overline{F_{q}}$] $\geqslant $[$\overline{F_{q}}(\sqrt{%
\overline{2}}):F_{P}$]=[$%
\mathbb{Z}
_{5}(\sqrt{\overline{2}}):%
\mathbb{Z}
_{5}$]$=2.$ In addition $\frac{1}{2}+%
\mathbb{Z}
\in \Gamma _{K_{q}}/\Gamma _{F_{q}}$ (being the coset of the value of $\sqrt{%
5}$), then [$\Gamma _{K_{q}}:\Gamma _{F_{q}}]\geq 2.$ Therefore by the
fundamental inequality, [$K_{q}:F_{q}]\geq $[$\overline{K_{q}}:\overline{%
F_{q}}$][$\Gamma _{K_{q}}:\Gamma _{F_{q}}]\geq 4,$ but since $K_{q}=F_{q}(%
\sqrt{5},\sqrt{2}),$ then [$K_{q}:F_{q}]\leq 4$, thus [$K_{q}:F_{q}]=4$ as
claimed, thus $G_{q}\cong 
\mathbb{Z}
_{2}\times 
\mathbb{Z}
_{2}.$
\end{example}

The following result is key to the proof of the main result.

\begin{proposition}
\label{dec=}Let $D\in \func{Br}(F),$ then $D\in \func{Dec}(K/F)$ if and only
if $D_{P}\in \func{Dec}(K_{P}/F_{P})$ for all primes $P$ of $F$.
\end{proposition}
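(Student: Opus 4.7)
The plan is to prove both implications separately: $(\Rightarrow)$ is essentially immediate from the definition of $\func{Dec}$, while $(\Leftarrow)$ requires a globalization of local decompositions via Hasse-Brauer-Noether combined with Proposition \ref{tchebo}.

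For $(\Rightarrow)$ I would write $D=\sum_j A_j$ with each $A_j\in \func{Br}(M_j/F)$ for a cyclic subextension $M_j/F$ of $K/F$. For any prime $P$, fix a prime $\mathfrak{P}$ of $K$ above $P$; then $\mathfrak{m}_j:=\mathfrak{P}\cap M_j$ realizes $(M_j)_{\mathfrak{m}_j}$ as a subfield of $K_P$, and $(M_j)_{\mathfrak{m}_j}/F_P$ is cyclic as a Galois subextension of the cyclic $M_j/F$. Since $A_j$ is split by $M_j$, the class $(A_j)_P$ is split by $(M_j)_{\mathfrak{m}_j}$, so $(A_j)_P\in \func{Br}((M_j)_{\mathfrak{m}_j}/F_P)\subseteq \func{Dec}(K_P/F_P)$; summing yields $D_P\in \func{Dec}(K_P/F_P)$.

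For $(\Leftarrow)$ I would use the cyclic subextensions $L_1,\ldots,L_n$ of $K/F$ of degree $t$ from Proposition \ref{tchebo}, so $K=L_1\cdots L_n$ and $K_P=(L_1)_P\cdots (L_n)_P$ at every prime $P$. By Remark \ref{tilemme copy(1)} applied both globally and locally (its argument only uses that the subfields generate the top field), $\func{Dec}(K/F)=\sum_i \func{Br}(L_i/F)$ and $\func{Dec}(K_P/F_P)=\sum_i \func{Br}((L_i)_P/F_P)$ for every $P$. The hypothesis then yields, for each $P$, a decomposition
\[
\func{Inv}_P(D)=\sum_i \alpha_{i,P},\qquad \alpha_{i,P}\in \tfrac{1}{[(L_i)_P:F_P]}\mathbb{Z}/\mathbb{Z},
\]
with all but finitely many entries zero. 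To construct $A_i\in \func{Br}(L_i/F)$ via Hasse-Brauer-Noether with $\func{Inv}_P(A_i)=\alpha_{i,P}$, the row sums $\beta_i:=\sum_P \alpha_{i,P}\in \tfrac{1}{t}\mathbb{Z}/\mathbb{Z}$ must all vanish; a priori we only know $\sum_i \beta_i=\sum_P \func{Inv}_P(D)=0$.

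The main obstacle is to correct the row sums without disturbing the column sums $\sum_i \alpha_{i,P}=\func{Inv}_P(D)$. Here I would invoke Proposition \ref{tchebo} to pick a single auxiliary prime $Q$, disjoint from the finitely many primes where $D_P$ or any $\alpha_{i,P}$ is nontrivial, at which $[(L_i)_Q:F_Q]=t$ for every $i$, and replace $\alpha_{i,Q}$ by $\alpha_{i,Q}-\beta_i$. Since each $\beta_i\in \tfrac{1}{t}\mathbb{Z}/\mathbb{Z}$ the denominator constraint at $Q$ is preserved; since $\sum_i \beta_i=0$ the column sum at $Q$ is preserved; and each row sum now vanishes. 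Applying Hasse-Brauer-Noether (invoking Lemma \ref{arch} to realize the prescribed non-archimedean invariants by cyclic algebras) produces $A_i\in \func{Br}(F)$ with $\func{Inv}_P(A_i)=\alpha_{i,P}$ for every $P$; the denominator conditions force $A_i$ to be split at every completion of $L_i$, so $A_i\in \func{Br}(L_i/F)$, and since $D$ and $\sum_i A_i$ share all local invariants, $D=\sum_i A_i\in \func{Dec}(K/F)$.
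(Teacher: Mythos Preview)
Your proposal is correct and follows essentially the same route as the paper's proof: both use Proposition~\ref{tchebo} to obtain the cyclic $L_i$'s, decompose $D_P$ locally via the Tignol lemma (Remark~\ref{tilemme copy(1)}), introduce a single auxiliary prime where all $[L_{iP}:F_P]=t$ to force the ``row sums'' to vanish, and then invoke the Hasse--Brauer--Noether sequence to globalize each $A_i$ and check $A_i\in\func{Br}(L_i/F)$ locally. The only cosmetic difference is that you work directly with local invariants $\alpha_{i,P}$ while the paper writes out explicit cyclic algebras $(L_{iP_j}/F_{P_j},\sigma_{ij},b_{ij})$; in particular your parenthetical appeal to Lemma~\ref{arch} is not actually needed in your formulation.
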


\begin{proof}
It is obvious that $D\in \func{Dec}(K/F)$ implies $D_{P}\in \func{Dec}%
(K_{P}/F_{P})$ for all primes $P$ of $F$.

Conversely, suppose $D_{P}\in \func{Dec}(K_{P}/F_{P})$ for all primes $P$ of 
$F$. Recall that by Proposition \ref{tchebo}, there exists cyclic
subextensions $\{L_{i}/F\}_{i=1}^{n}$ of $K/F$ of degree $t$ so that $%
K=L_{1}...L_{n}$ and there exists infinitely many $P$ of $F$ so that $%
[L_{iP}:F_{P}]=t$ for all $i.$ Recall also that there is a well-known short
exact sequence [see for example Re, 32.13]: 
\begin{equation*}
0\longrightarrow \func{Br}(F)\overset{\oplus r_{F_{P}/F}}{\longrightarrow }%
\underset{P}{\oplus }\func{Br}(F_{P})\overset{\oplus \func{Inv}_{F_{P}}}{%
\longrightarrow }%
\mathbb{Q}
/%
\mathbb{Z}
\longrightarrow 0\text{ \ \ \ \ \ \ \ \ \ \ \ \ (}S\text{)}
\end{equation*}

Let \{$P_{1},...,P_{m}$\} be the support of $D.$ By assumption, $%
D_{P_{j}}\in \func{Dec}(K_{P_{j}}/F_{P_{j}})$ for all $j=1,...,m.$ Since for
each $j,$ $K_{P_{j}}=L_{1P_{j}}...L_{nP_{j}},$ then by Remark \ref{tilemme}
and [Dr, Th. 1 pp.73], for each $j,$ there exist $b_{1j},...,b_{nj}\in
F_{P_{j}}^{\ast }$ such that 
\begin{equation*}
D_{P_{j}}\sim \overset{n}{\underset{i=1}{\otimes }}(L_{iP_{j}}/F_{P_{j}},%
\sigma _{ij},b_{ij}).\text{ \ \ \ \ \ \ \ \ \ \ \ \ \ \ \ \ \ \ \ \ \ \ \ \
\ \ \ \ \ \ \ \ \ \ \ \ \ \ \ \ \ \ (}1\text{)}
\end{equation*}%
For each $i=1,...,n$ and $j=1,...,m,$ write $\func{Inv}%
_{P_{j}}[(L_{iP_{j}}/F_{P_{j}},\sigma _{ij},b_{ij})]=\frac{t_{ij}}{s_{ij}}+%
\mathbb{Z}
$ with $\gcd (t_{ij},s_{ij})=1$. Then $\exp [(L_{iP_{j}}/F_{P_{j}},\sigma
_{ij},b_{ij})]=s_{ij}$ for all $i,j.$ But as $[L_{iP_{j}}:F_{P_{j}}]$
divides $t$ for all $i,j,$ then $(L_{iP_{j}}/F_{P_{j}},\sigma
_{ij},b_{ij})^{\otimes t}\sim 1$ for all $i,j;$ thus $s_{ij}$ divides $t$
for all $i,j.$ Therefore, for each $i=1,...,n,$ there exists $0\leq t_{i}<t$
so that $\dsum\limits_{j=1}^{m}\func{Inv}_{P_{j}}[(L_{iP_{j}}/F_{P_{j}},%
\sigma _{ij},b_{ij})]=\frac{t_{i}}{t}+%
\mathbb{Z}
.$ For each $i=1,...,n$, set $r_{i}=t-t_{i}.$ Then by Proposition \ref%
{tchebo} there exists a non-Archimedean prime $P_{\ast }\notin
\{P_{1},...,P_{m}\}$ so that $[L_{iP_{\ast }}:F_{P_{\ast }}]=t$ for each $%
i=1,...,n.$ Then by Lemma \ref{arch}, there exists $b_{i}^{\ast }\in
F_{P_{\ast }}$ so that $\func{Inv}_{P_{\ast }}[(L_{iP_{\ast }}/F_{P_{\ast
}},\sigma _{i},b_{i}^{\ast })]=\frac{r_{i}}{t}+%
\mathbb{Z}
.$ Therefore, for each $i=1,...,n,$ the element of $\underset{P}{\oplus }%
\func{Br}(F_{P})$ whose $P_{j}^{{}}$ coordinate is $(L_{iP_{j}}/F_{P_{j}},%
\sigma _{ij},b_{ij})$ ($j=1,...m$), the $P_{\ast }^{\prime }s$ coordinate is 
$(L_{iP_{\ast }}/F_{P_{\ast }},\sigma _{i},b_{i}^{\ast }),$ and the $%
P^{\prime }s$ coordinate is $F_{P}$ for $P\notin \{P_{1},...,P_{m},P_{\ast
}\}$ is in the kernel of the map $\oplus \func{Inv}_{F_{P}}.$ Hence by
exactness of ($S$), there exists $A_{i}\in \func{Br}(F)$ so that :

\begin{eqnarray*}
A_{iP_{j}} &\sim &(L_{iP_{j}}/F_{P_{j}},\sigma _{ij},b_{ij})\text{ \ \ \ \ \ 
}j=1,...,m \\
A_{iP_{\ast }} &\sim &(L_{iP_{\ast }}/F_{P_{\ast }},\sigma _{i},b_{i}^{\ast
})\text{ \ \ \ \ \ \ \ \ \ \ \ \ \ \ \ \ \ \ \ \ \ \ \ \ \ \ \ \ \ \ \ \ \ \
\ \ \ \ \ \ \ \ \ \ \ \ \ \ \ \ \ \ (}2\text{)} \\
A_{iP} &\sim &1\text{ \ \ \ \ \ \ \ \ \ \ \ \ \ \ \ \ \ \ \ \ \ \ \ \ \ \ \
for \ }P\notin \{P_{1},...,P_{m},P_{\ast }\}
\end{eqnarray*}

We will complete the proof by the following two claims:

\textbf{Claim 1}: $D\sim \underset{i=1}{\overset{n}{\otimes }}A_{i}$

We need to see that $D_{P}\sim \underset{i=1}{\overset{n}{\otimes }}A_{iP}$
for all primes $P.$ This holds trivially for primes $P\notin
\{P_{1},...,P_{m},P_{\ast }\}$ and easily for primes $P\in
\{P_{1},...,P_{m}\}$ using the relations ($1$) and ($2$). On the other hand, 
$($ $\underset{i=1}{\overset{n}{\otimes }}A_{i})\otimes D^{op}$ has
invariants $%
\mathbb{Z}
$ everywhere except possibly at $P_{\ast },$ but as $($ $\underset{i=1}{%
\overset{n}{\otimes }}A_{i})\otimes D^{op}\in \func{Br}(F),$ the exactness
of the sequence ($S$) implies that the sum of all the invariants of $($ $%
\underset{i=1}{\overset{n}{\otimes }}A_{i})\otimes D^{op}$ is $%
\mathbb{Z}
,$ hence its invariant at $P_{\ast }$ must also be $%
\mathbb{Z}
.$ Therefore $($ $\underset{i=1}{\overset{n}{\otimes }}A_{i})\otimes
D^{op}\sim 1,$ and Claim 1 follows.

\textbf{Claim 2}: $A_{i}\in \func{Br}(L_{i}/F)$ for each $i=1,...,n.$

Again, we just need to prove this locally at each prime of $F$. Let $%
i=1,...,n$ and $P$ a prime of $F;$ if $P=P_{j}\in \{P_{1},...,P_{m}\},$ then 
$A_{iP_{j}}\sim (L_{iP_{j}/F_{P_{j}}},\sigma _{ij},b_{ij})$, therefore $%
A_{iP_{j}}\in \func{Br}(L_{iP_{j}}/F_{P_{j}}).$ If $P=P_{\ast },$ then $%
A_{iP_{\ast }}\sim (L_{iP_{\ast }}/F_{P_{\ast }},\sigma _{i},b_{i}^{\ast }),$
so $A_{iP_{\ast }}\in $\ \ $\func{Br}(L_{iP_{\ast }}/F_{P_{\ast }}).$ If $%
P\notin \{P_{1},...,P_{m},P_{\ast }\},$ then $A_{iP}\sim 1$ and the result
holds trivially.
\end{proof}

\begin{remark}
\label{img}Let $D\in \func{Br}_{t}(K/F),$ then for any prime $q$ of $F,$ $%
D_{q}\in \func{Br}_{t}(K_{q}/F_{q}).$ Write $\func{Inv}_{q}(D_{q})=\frac{r}{s%
}+%
\mathbb{Z}
$ with $\gcd (r,s)=1,$ then $\func{Ind}(D_{q})=\exp (D_{q})=s,$ therefore $s$
divides $t$ and $[K_{q}:F_{q}],$ hence $s$ divides $d_{q}.$ Therefore $D\in 
\func{Br}_{t}(K/F)$ implies for each prime $q$ of $F$, $\func{Inv}%
_{q}(D_{q})=\frac{r}{s}+%
\mathbb{Z}
=\frac{r^{\prime }}{d_{q}}+%
\mathbb{Z}
\in \frac{1}{d_{q}}%
\mathbb{Z}
/%
\mathbb{Z}
.$ On the other hand if $\ D\in \func{Dec}(K/F),$ then $D_{q}\in \func{Dec}%
(K_{q}/F_{q})$ for all primes $q$ of $F.$ But since $K_{q}=L_{1q}\cdots
L_{nq},$ then by Remark \ref{tilemme} $D_{q}\sim A_{1}\otimes \cdots \otimes
A_{n}$ with $A_{i}\in \func{Br}(L_{iq}/F_{q})$ for each $i.$ Hence for each $%
i,$ $\exp (A_{i})=\func{Ind}(A_{i})$ divides $[L_{iq}:F_{q}],$ thus for each
prime $q,$ $\exp (D_{q})$ divides $\func{lcm}(\exp (A_{i}))$ which divides $%
\func{lcm}[L_{iq}:F_{q}]=c_{q}$ [Remark \ref{lcm}]. But as $\func{Inv}%
_{q}(D_{q})=\frac{n_{q}}{\exp (D_{q})}+%
\mathbb{Z}
$ for some integer $n_{q},$ it follows that for each prime $q,$ $\func{Inv}%
_{q}(D_{q})=\frac{n_{q}^{\prime }}{c_{q}}+%
\mathbb{Z}
$ for some integer $n_{q}^{\prime }.$ Therefore $\ D\in \func{Dec}(K/F)$
implies for each prime $q$ of $F$, $\func{Inv}_{q}(D_{q})\in \frac{1}{c_{q}}%
\mathbb{Z}
/%
\mathbb{Z}
.$
\end{remark}

The following is the main result of this paper.

\begin{theorem}
\label{ses}Let $K/F$ a finite abelian Galois extension of global fields
whose Galois group $G:=\mathcal{G}(K/F)$ has exponent $t$. Then there is a
short exact sequence 
\begin{equation*}
0\longrightarrow \func{Dec}(K/F)\overset{f}{\longrightarrow }\func{Br}%
_{t}(K/F)\overset{g}{\longrightarrow }\underset{\text{bad }q^{\prime }s}{%
\oplus }\frac{c_{q}}{d_{q}}%
\mathbb{Z}
/%
\mathbb{Z}
\longrightarrow 0
\end{equation*}%
where $f$ is the inclusion map [Lemma \ref{decin}].
\end{theorem}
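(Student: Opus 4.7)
The plan is to construct the map $g$ directly from local invariants, identify its kernel using Remark~\ref{img} together with Proposition~\ref{dec=}, and prove surjectivity by producing a Brauer class with prescribed local invariants via the global exact sequence $(S)$.

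For the definition of $g$: by Remark~\ref{img} every $D\in \func{Br}_{t}(K/F)$ satisfies $\func{Inv}_{q}(D_{q})\in \frac{1}{d_{q}}\mathbb{Z}/\mathbb{Z}$ at every prime $q$, and since $c_{q}\mid d_{q}$, multiplication by $c_{q}$ maps $\frac{1}{d_{q}}\mathbb{Z}/\mathbb{Z}$ onto $\frac{c_{q}}{d_{q}}\mathbb{Z}/\mathbb{Z}$ with kernel $\frac{1}{c_{q}}\mathbb{Z}/\mathbb{Z}$. So I would set $g(D):=\bigoplus_{\text{bad }q} c_{q}\,\func{Inv}_{q}(D_{q})$. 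This is a well-defined homomorphism because $D$ has finite support and, by Remark~\ref{finbad}, there are only finitely many bad primes.

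For middle exactness, the inclusion $\func{Dec}(K/F)\subseteq \ker g$ is the second half of Remark~\ref{img}: if $D\in \func{Dec}(K/F)$ then $\func{Inv}_{q}(D_{q})\in \frac{1}{c_{q}}\mathbb{Z}/\mathbb{Z}$, which multiplication by $c_{q}$ annihilates. Conversely, if $D\in \ker g$ then at bad primes the kernel condition gives $\func{Inv}_{q}(D_{q})\in \frac{1}{c_{q}}\mathbb{Z}/\mathbb{Z}$, while at non-bad primes $c_{q}=d_{q}$ and the containment $D\in \func{Br}_{t}(K/F)$ already forces the same conclusion via Remark~\ref{img}. Hence $\exp(D_{q})\mid c_{q}$ at every prime, so $D_{q}\in \func{Br}_{c_{q}}(K_{q}/F_{q})=\func{Dec}(K_{q}/F_{q})$ by Proposition~\ref{decloc}, and Proposition~\ref{dec=} upgrades this globally to $D\in \func{Dec}(K/F)$.

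The step I expect to be the main obstacle is surjectivity. Given a target $\bigl(\frac{m_{q}c_{q}}{d_{q}}+\mathbb{Z}\bigr)_{q\text{ bad}}$, my plan is to prescribe local invariants: put $\func{Inv}_{q}(D_{q})=\frac{m_{q}}{d_{q}}+\mathbb{Z}$ at each bad $q$, put $0$ at every prime outside the bad set and one auxiliary prime $P^{*}$, and at $P^{*}$ take the unique invariant making the total sum vanish in $\mathbb{Q}/\mathbb{Z}$ as required by $(S)$. The compensating invariant at $P^{*}$ lies in $\frac{1}{t}\mathbb{Z}/\mathbb{Z}$ since each $d_{q}\mid t$, and for it to be realizable locally (and for the resulting $D$ to lie in $\func{Br}_{t}(K/F)$) I need $d_{P^{*}}=t$; Proposition~\ref{tchebo} supplies infinitely many primes with $[K_{P^{*}}:F_{P^{*}}]=t$, so one may be chosen outside the finite set of bad primes. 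Exactness of $(S)$ then produces $D\in \func{Br}(F)$ whose local invariants match the prescription, and by construction $D\in \func{Br}_{t}(K/F)$ with $g(D)$ equal to the target.
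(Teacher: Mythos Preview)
Your argument is correct and uses exactly the same ingredients as the paper (Remark~\ref{img}, Propositions~\ref{decloc}, \ref{dec=}, \ref{tchebo}, and the sequence~$(S)$); the only packaging difference is that you define $g$ directly as $c_q\cdot\func{Inv}_q$ and verify exactness by hand, whereas the paper builds two parallel short exact sequences $0\to\mathcal{K}\to\func{Dec}(K/F)\to\bigoplus\frac{1}{c_q}\mathbb{Z}/\mathbb{Z}\to 0$ and $0\to\mathcal{K}\to\func{Br}_t(K/F)\to\bigoplus\frac{1}{d_q}\mathbb{Z}/\mathbb{Z}\to 0$ with the common kernel $\mathcal{K}=\{D:\func{Inv}_q(D_q)=0\text{ for all bad }q\}$ and then applies the Snake Lemma. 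Your route is slightly more economical since you only need surjectivity of one map rather than two, and your kernel computation (allowing $\func{Inv}_q(D_q)\in\frac{1}{c_q}\mathbb{Z}/\mathbb{Z}$ at bad primes rather than requiring it to vanish) is the natural one for the composite map.
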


\begin{proof}
By Remark \ref{img}, there is a group homomorphism $\varphi :\func{Br}%
_{t}(K/F)\longrightarrow \underset{\text{bad }q^{\prime }s}{\oplus }\frac{1}{%
d_{q}}%
\mathbb{Z}
/%
\mathbb{Z}
$ defined by $\varphi ([D])=\{\func{Inv}_{q}(D_{q})\}_{\text{bad }q^{\prime
}s}$ and the restriction $\phi $ of $\varphi $ to $\func{Dec}(K/F)$ is a
group homomorphism $\func{Dec}(K/F)\longrightarrow \underset{\text{bad }%
q^{\prime }s}{\oplus }\frac{1}{c_{q}}%
\mathbb{Z}
/%
\mathbb{Z}
.$ Let $\mathcal{K=}\ker \varphi $. We assert that $\mathcal{K}\subseteq 
\func{Dec}(K/F).$ To see this, let $D\in \mathcal{K},$ then $D\in \func{Br}%
_{t}(K/F)$ and $\func{Inv}_{q}(D_{q})=%
\mathbb{Z}
$ for each bad prime $q.$ So for each bad prime $q$, $D_{q}\sim 1.$ But on
the other hand since $D\in \func{Br}_{t}(K/F),$ then for each prime $q,$ $%
D_{q}\in \func{Br}_{t}(K_{q}/F_{q}).$ Two cases can be considered:

\textbf{Case 1}: If $q$ is a bad prime, then $D_{q}\sim 1\in \func{Dec}%
(K_{q}/F_{q}).$

\textbf{Case 2}: If $q$ is not a bad prime, then as $D_{q}\in \func{Br}%
_{t}(K_{q}/F_{q}),$ then $\exp (D_{q})=\func{Ind}(D_{q})$ divides $d_{q},$
hence $D_{q}\in \func{Br}_{d_{q}}(K_{q}/F_{q})=\func{Br}%
_{c_{q}}(K_{q}/F_{q})=\func{Dec}(K_{q}/F_{q}).$ The last equality is
Proposition \ref{decloc}, therefore $D_{q}\in \func{Dec}(K_{q}/F_{q})$ for
all primes $q$ of $F,$ hence by Proposition \ref{dec=}, $D\in \func{Dec}%
(K/F) $. We obtain the following commutative diagram where the vertical
arrows are inclusion maps.

\begin{equation*}
\begin{array}{ccccccc}
0\rightarrow & \mathcal{K} & \longrightarrow & \func{Dec}(K/F) & \overset{%
\phi }{\longrightarrow } & \underset{\text{bad }q^{\prime }s}{\oplus }\frac{1%
}{c_{q}}%
\mathbb{Z}
/%
\mathbb{Z}
& \rightarrow 0 \\ 
& {\Large ||} &  & {\LARGE \downarrow \alpha } &  & \downarrow \beta &  \\ 
0\rightarrow & \mathcal{K} & \longrightarrow & \func{Br}_{t}(K/F) & \overset{%
\varphi }{\longrightarrow } & \underset{\text{bad }q^{\prime }s}{\oplus }%
\frac{1}{d_{q}}%
\mathbb{Z}
/%
\mathbb{Z}
& \rightarrow 0%
\end{array}%
\end{equation*}

We show that the rows of the diagram are short exact sequences. But the only
thing to show is that the maps $\varphi $ and $\phi $ are surjective. Let $%
(r_{q})_{\text{bad }q^{\prime }s}\in \underset{\text{bad }q^{\prime }s}{%
\oplus }\frac{1}{d_{q}}%
\mathbb{Z}
/%
\mathbb{Z}
,$ then for each bad prime $q,$ $r_{q}=\frac{t_{q}}{d_{q}}+%
\mathbb{Z}
$ for some integer $t_{q.}$ Recall that by Remark \ref{finbad}, there are
only finitely many bad primes, let $d=\func{lcm}\{d_{q}:q$ bad prime$\}.$ On
the other hand, by Proposition \ref{tchebo}, there exists a prime $P$ of $F$
that is not bad so that $[L_{iP}:F_{P}]=t$ for all $i=1,...,n.$ In addition,
there exists, $D\in \func{Br}(F)$ with:

\begin{eqnarray*}
\func{Inv}_{q}(D_{q}) &=&\frac{t_{q}}{d_{q}}+%
\mathbb{Z}
\text{ \ \ \ \ for each bad prime }q \\
\func{Inv}_{P}(D_{P}) &=&\frac{n^{\prime }}{d}+%
\mathbb{Z}
\\
\func{Inv}_{q}(D_{q}) &=&%
\mathbb{Z}
\text{ \ \ \ \ for }q\notin \{\text{bad primes, }P\}
\end{eqnarray*}

where $n^{\prime }=-d\dsum\limits_{\text{bad }q^{\prime }s}\frac{t_{q}}{d_{q}%
}.$

\textbf{Claim}: $D\in \func{Br}_{t}(K/F)$ and $\varphi ([D])=(r_{q}).$

Note that for each bad prime $q,$ $d_{q}$ divides $t,$ therefore $d$ divides 
$t.$ Also for each prime $q$ (bad or not), $o(\func{Inv}_{q}(D_{q}))$
divides $d$ (where $o(\func{Inv}_{q}(D_{q}))$ denotes the order of $\func{Inv%
}_{q}(D_{q})$ in $%
\mathbb{Q}
/%
\mathbb{Z}
$)$,$ so $\func{lcm}\{o(\func{Inv}_{q}(D_{q})):q\in \{$bad primes, $P\}\}$
divides $d,$ hence as $\exp (D)=\func{lcm}\{o(\func{Inv}_{q}(D_{q})):q\in \{$%
bad primes, $P\}\},$ then $\exp (D)$ divides $d,$ thus $\exp (D)$ divides $t.
$ It remains to show that $D\in \func{Br}(K/F),$ or equivalently $D_{q}\in 
\func{Br}(K_{q}/F_{q})$ for all primes $q$ of $F$ which we now show by
considering the following cases.

\textbf{Case 1}: If $q$ is a bad prime, $\func{Ind}(D_{q})=o(\func{Inv}%
_{q}(D_{q}))$ divides $d_{q}$ and $d_{q}$ divides $|G_{q}|=[K_{q}:F_{q}],$
thus $D_{q}\in \func{Br}(K_{q}/F_{q}).$

\textbf{Case 2}: If $q=P,$ then $\func{Ind}(D_{P})=o(\func{Inv}_{P}(D_{P}))$
divides $d$ and $d$ divides $t=[L_{1P}:F_{P}],$ so $t$ divides $%
[K_{q}:F_{q}],$ thus $\func{Ind}(D_{P})$ divides $[K_{q}:F_{q}],$ hence $%
D_{q}\in \func{Br}(K_{q}/F_{q}).$

\textbf{Case 3}: If $q\notin \{$bad primes, $P\},$ then $D_{q}\sim 1,$ hence 
$D_{q}\in \func{Br}(K_{q}/F_{q}).$

Therefore $D\in \func{Br}_{t}(K/F),$ and clearly by construction of $D$, $%
\varphi ([D])=(r_{q})_{\text{bad }q^{\prime }s}.$

We show that $\phi $ is also surjective. Let $(r_{q})_{\text{bad }q^{\prime
}s}\in \underset{\text{bad }q^{\prime }s}{\oplus }\frac{1}{c_{q}}%
\mathbb{Z}
/%
\mathbb{Z}
.$ Then for each bad prime $q,$ $r_{q}=\frac{t_{q}}{c_{q}}+%
\mathbb{Z}
$ for some integer $t_{q.}$ Recall that by Remark \ref{finbad}, there are
only finitely many bad primes, let $c=\func{lcm}\{c_{q}:q$ bad prime$\}.$
Note that as $c_{q}|d_{q}$ for each $q,$ then $c|d$ and from the above $d|t$%
, it follows that $c|t.$ On the other hand, by Proposition \ref{tchebo},
there exists a prime $P$ of $F$ that is not bad so that $[L_{iP}:F_{P}]=t$
for all $i=1,...,n.$ In addition, there exists, $D\in \func{Br}(F)$ with

\begin{eqnarray*}
\func{Inv}_{q}(D_{q}) &=&\frac{t_{q}}{c_{q}}+%
\mathbb{Z}
\text{ \ \ \ \ for each bad prime }q \\
\func{Inv}_{P}(D_{P}) &=&\frac{m^{\prime }}{c}+%
\mathbb{Z}
\\
\func{Inv}_{q}(D_{q}) &=&%
\mathbb{Z}
\text{ \ \ \ \ for }q\notin \{\text{bad primes, }P\}
\end{eqnarray*}

where $m^{\prime }=-c\dsum\limits_{\text{bad }q^{\prime }s}\frac{t_{q}}{c_{q}%
}.$

\textbf{Claim}: $D\in \func{Dec}(K/F)$ and $\phi ([D])=(r_{q})_{\text{bad }%
q^{\prime }s}.$

To show that $D\in \func{Dec}(K/F)$, it is enough by Proposition \ref{dec=}
to show that $D_{q}\in \func{Dec}(K_{q}/F_{q})$ for all primes $q$ of $F$
which we now show again by considering few cases:

\textbf{Case 1}: If $q$ is a bad prime, $\func{Ind}(D_{q})=o(\func{Inv}%
_{q}(D_{q}))$ divides $c_{q}$ and $c_{q}$ divides $|G_{q}|=[K_{q}:F_{q}],$
thus by [Re, Corollary 31.10], $D_{q}\in \func{Br}_{c_{q}}(K_{q}/F_{q})=%
\func{Dec}(K_{q}/F_{q}).$ The last equality follows from Proposition \ref%
{decloc}.

\textbf{Case 2}: If $q=P,$ then $\func{Ind}(D_{P})=o(\func{Inv}_{P}(D_{P}))$
divides $c$ and $c$ divides $t=[L_{1P}:F_{P}],$ so $t$ divides $%
[K_{q}:F_{q}]=|G_{q}|$ (so $d_{q}=c_{q}=t$)$.$ Thus $\func{Ind}(D_{P})$
divides $[K_{q}:F_{q}],$ hence $D_{q}\in \func{Br}_{c_{q}}(K_{q}/F_{q})=%
\func{Dec}(K_{q}/F_{q})$ again by Proposition \ref{decloc}$.$

\textbf{Case 3}: If $q\notin \{$bad primes, $P\},$ then $D_{q}\sim 1,$ hence 
$D_{q}\in \func{Dec}(K_{q}/F_{q}).$

Therefore $D\in \func{Dec}(K/F),$ and clearly by construction of $D$, $\phi
([D])=(r_{q})_{\text{bad }q^{\prime }s}.$

By the Snake Lemma, 
\begin{equation*}
\func{Br}_{t}(K/F)/\func{Dec}(K/F)\cong (\underset{\text{bad }q^{\prime }s}{%
\oplus }\frac{1}{d_{q}}%
\mathbb{Z}
/%
\mathbb{Z}
)/(\underset{\text{bad }q^{\prime }s}{\oplus }\frac{1}{c_{q}}%
\mathbb{Z}
/%
\mathbb{Z}
)\cong \underset{\text{bad }q^{\prime }s}{\oplus }\frac{c_{q}}{d_{q}}%
\mathbb{Z}
/%
\mathbb{Z}%
\end{equation*}
which completes the proof of the Theorem.
\end{proof}

\begin{corollary}
\label{dec=b}Let $K/F$ be a finite abelian Galois extension of global fields
whose Galois group has a square free exponent $t$ 
\end{corollary}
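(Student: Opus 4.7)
The plan is to deduce the corollary directly from Theorem \ref{ses} by showing that when $t$ is square free there are no bad primes at all, so the cokernel in the short exact sequence is trivial.

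First I would establish the key arithmetic fact: if $t$ is square free, then $c_q = d_q$ for every prime $q$ of $F$. Recall $c_q = \exp(G_q)$ and $d_q = \gcd(t, |G_q|)$. Since $G_q$ is a subgroup of $G$, its exponent divides $\exp(G) = t$, and it also divides $|G_q|$, so $c_q \mid d_q$ is automatic (this was already noted before the definition of bad prime). For the reverse divisibility, note that $d_q \mid t$ is square free, hence $d_q$ is a product of distinct primes $p_1, \ldots, p_k$. Each $p_i$ divides $|G_q|$, so by Cauchy's theorem $G_q$ contains an element of order $p_i$, which forces $p_i \mid c_q$. Because $d_q$ is square free and each of its prime factors divides $c_q$, we conclude $d_q \mid c_q$, hence $c_q = d_q$.

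Having shown $c_q = d_q$ for all $q$, no prime of $F$ is bad in the sense of the paragraph preceding Remark \ref{finbad}. Applying Theorem \ref{ses}, the indexing set of the direct sum on the right of the short exact sequence is empty, so the cokernel vanishes and the inclusion $f : \func{Dec}(K/F) \hookrightarrow \func{Br}_t(K/F)$ is an isomorphism. This yields the desired equality $\func{Dec}(K/F) = \func{Br}_t(K/F)$.

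There is essentially no obstacle here beyond the elementary number theoretic observation above; the corollary is a clean consequence of the main theorem once one unpacks what "bad prime" means under a square-free exponent hypothesis. The only thing worth being careful about is that square-freeness of $t$ is used in an essential way to pass from "every prime factor of $d_q$ divides $c_q$" to "$d_q$ divides $c_q$"; without square-freeness this step fails, which is exactly why nontrivial bad primes can and do occur in general (as illustrated by the example with $K = \mathbb{Q}(i)(\sqrt{5}, \sqrt[4]{14})$).
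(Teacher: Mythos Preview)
Your proposal is correct and follows essentially the same route as the paper: both argue that every prime dividing $d_q$ divides $|G_q|$ and hence (via Cauchy) divides $c_q=\exp(G_q)$, then use square-freeness of $t$ (and thus of $d_q$) to conclude $c_q=d_q$, so there are no bad primes and Theorem~\ref{ses} gives the result. The only cosmetic difference is that the paper phrases the Cauchy step as ``$G_q$ has a subgroup of order $p$'' rather than invoking Cauchy by name.
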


Then $\func{Dec}(K/F)=\func{Br}_{t}(K/F)$

\begin{proof}
Suppose that $t$ is square free. Let $q$ a prime of $F,$ and prime number $p$
dividing $d_{q},$ then $p$ divides $|G_{q}|,$ thus $G_{q}$ has a subgroup $H$
of order $p,$ thus $p$=$\exp H$ divides $\exp (G_{q})=c_{q}.$ So $c_{q}$ and 
$d_{q}$ have the same prime factors, but as $t$ is square free, so are $c_{q}
$ and $d_{q},$ thus $c_{q}=d_{q}$. Therefore $F$ has no bad primes, and by
the exactness of the sequence of Theorem \ref{ses}, $\func{Dec}(K/F)=\func{Br%
}_{t}(K/F).$
\end{proof}

\section{Division algebras over Henselian valued field with residue a local
or global field}

In this section, we show that if $F$ is a Henselian valued field containing
a primitive $p^{th}$ root of unity so that $\overline{F}$ is either a local
or a global field and $\func{Char}(\overline{F})\neq p$, every prime
exponent division algebra over $F$ is isomorphic to a product of symbol
algebras. We also construct an example to show that the result does not
generalize to higher exponent algebras. In this section, we will use the
terminologies inertial, NSR (nicely semiramified), TR (totally ramified)
division algebras whose definitions and related results can be found in [JW,
sections 1 and 4]. We will also use the terminologies of gauges and
armatures. A good reference for gauges in [TW$_{1}$] and a good reference
for armatures is [TW$_{2}$].

\begin{proposition}
\label{lind} Let $F$ be a Henselian valued field containing a primitive $%
p^{th}$ root of unity such that $\overline{F}$ is either a local or a global
field with $\func{Char}(\overline{F})\neq p$. Then for each $D\in \func{Br}%
_{p}(F),$ $\func{Ind}(D)=p^{l(D)},$ or equivalently any prime exponent
division algebra is isomorphic to a tensor product of symbol algebras of
degree the same prime.
\end{proposition}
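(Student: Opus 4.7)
The plan is to combine the structure theory of tame division algebras over Henselian valued fields with the residue-field results of the previous section. Since $p$ is prime and therefore squarefree, Corollary~\ref{dec=b} applies when $\overline{F}$ is global and Proposition~\ref{decloc} applies when $\overline{F}$ is local; in either case $\func{Dec}(\overline{K}/\overline{F}) = \func{Br}_{p}(\overline{K}/\overline{F})$ for every finite abelian $\overline{K}/\overline{F}$ of exponent dividing $p$. Moreover, since $\func{Char}(\overline{F}) \neq p$ and $\omega \in F$, the residue $\overline{\omega}$ is a primitive $p^{th}$ root of unity in $\overline{F}$, so every cyclic algebra of degree $p$ over $\overline{F}$ or $F$ is a symbol algebra.

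Because $\func{Char}(\overline{F}) \neq p$, every $D \in \func{Br}_{p}(F)$ is tame, the valuation on $F$ extends uniquely to $D$, and the Jacob--Wadsworth structure theorem together with the gauge/armature language of [TW$_{1}$, TW$_{2}$] becomes available. The first step is to split $D$ (up to Brauer equivalence) as $D \sim I \otimes_{F} T$ with $I$ inertial and $T$ totally ramified; this is the standard JW decomposition in the tame case with $\omega \in F$.

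For the inertial piece $I$, the residue $\overline{I}$ lies in $\func{Br}_{p}(\overline{F})$. Local class field theory (when $\overline{F}$ is local) or the Albert--Brauer--Hasse--Noether theorem (when $\overline{F}$ is global) gives $\overline{I} \in \func{Br}_{p}(\overline{K}/\overline{F})$ for some finite abelian $\overline{K}/\overline{F}$ of exponent dividing $p$; by Corollary~\ref{dec=b} or Proposition~\ref{decloc}, $\overline{I} \in \func{Dec}(\overline{K}/\overline{F})$, so $\overline{I}$ is Brauer-equivalent to a tensor product of degree-$p$ cyclic algebras over $\overline{F}$. Since $\overline{\omega} \in \overline{F}$, these cyclic algebras are symbol algebras $A_{\overline{\omega}}(\overline{a},\overline{b})$, and each lifts via Hensel's lemma to an inertial symbol algebra $A_{\omega}(a,b)$ over $F$; by uniqueness of inertial lifts, $I$ is Brauer-equivalent to a tensor product of degree-$p$ symbols over $F$. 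For the totally ramified piece $T$, the armature theory of [TW$_{2}$] shows that any exponent-$p$ totally ramified division algebra over a Henselian field containing $\omega$ is already a tensor product of degree-$p$ symbol algebras.

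Combining the two decompositions and absorbing matrix factors shows that $D$ is isomorphic to a tensor product of degree-$p$ symbol algebras, and comparing dimensions with $\func{Ind}(D) = p^{n}$ forces the minimal symbol count to be exactly $n$, so $\func{Ind}(D) = p^{l(D)}$. The main obstacle is producing a clean splitting $D \sim I \otimes T$ with $\func{Ind}(D) = \func{Ind}(I) \cdot \func{Ind}(T)$, so that the symbol decompositions of $I$ and $T$ assemble into a decomposition of $D$ itself rather than merely of $M_{r}(D)$ for some $r > 1$; this is where the full force of the gauge/armature framework and the separation of inertial and totally ramified contributions is used.
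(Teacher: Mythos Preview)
Your two-piece splitting $D \sim I \otimes_F T$ with $I$ inertial and $T$ totally ramified is not the Jacob--Wadsworth decomposition and is not available in general. The tame decomposition is $D \sim I \otimes N \otimes T$ with a \emph{nicely semiramified} middle factor $N$, and an NSR division algebra is neither inertial nor totally ramified: its residue $\overline{N}$ is a nontrivial abelian field extension of $\overline{F}$, while its relative value group is also nontrivial. Concretely, over $F = k((x))$ with $k$ global and $a \in k^{\ast}\setminus k^{\ast 2}$, the quaternion algebra $(a,x)$ is NSR; indeed over such an $F$ the relative value group $\Gamma_T/\Gamma_F$ of any TR division algebra would be cyclic with a nondegenerate alternating pairing, hence trivial, so your splitting would force every $D \in \func{Br}_p(F)$ to be inertial, which is false. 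The first step of your argument therefore fails, and the hand-wave at the end toward ``the full force of the gauge/armature framework'' does not repair it.

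The omission of $N$ is also exactly where Corollary~\ref{dec=b} is actually needed, and not in the way you use it. Over a local or global $\overline{F}$ a prime-exponent division algebra already satisfies $\exp = \func{Ind}$ and is hence a single degree-$p$ cyclic, so showing $\overline{I}$ is a product of symbols requires no Dec argument at all. What the paper does is set $S$ equal to the underlying division algebra of $I \otimes N$, invoke Morandi's theorem (using $\Gamma_N \cap \Gamma_T = \Gamma_F$) to get an honest isomorphism $D \cong S \otimes T$, and then analyse $S$ through the index formula $\func{ind}(S) = \func{ind}(\overline{I}_{\overline{N}})\,\func{ind}(N)$ of [JW, Theorem 5.15]. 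When $\overline{I}_{\overline{N}}$ is nonsplit one gets $S \cong I \otimes N$ directly; when it is split, $\overline{I} \in \func{Br}_p(\overline{N}/\overline{F})$, and \emph{this} is where Corollary~\ref{dec=b} is applied to obtain $\overline{I} \in \func{Dec}(\overline{N}/\overline{F})$, which by [JW, Theorem 5.15] forces $S$ itself to be NSR and hence a product of symbols. Your proposal misses this mechanism entirely.
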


\begin{proof}
We will prove that D is isomorphic to a product of symbols. By [Ja, Lemma
3.4], $D$ can be decomposed as $D\sim I\otimes N\otimes T$ where $I$ is
inertial, $N$ is an NSR algebra, $T$ is TR with $\Gamma _{N}\cap \Gamma
_{T}=\Gamma _{F}$ and $\exp (I),\exp (N),\exp (T)$ divides $p.$ Let $S\sim
I\otimes N,$ the underlying division algebra of $I\otimes N.$ Then by [Mo,
Theorem 1], since $\Gamma _{S}\cap \Gamma _{T}=\Gamma _{F},$ $\overline{S}%
\otimes _{\overline{F}}\overline{T}$ is a division algebra and $T$ is
defectless over $F$, then $D\cong S\otimes T.$ Since $T$ is a product of
symbols [Dr$_{2},$ Thm1], it is enough to prove that $S$ is a product of
symbols. As the only prime exponent algebras over number/local fields are
either split or symbols, we get by [JW, Proposition 2.8] $l(I)=l(\overline{I}%
)=0,1.$ If $I\sim 0,$ then $S$\ is NSR and therefore by [JW, Proposition
4.4], is a product of symbols. If $I$ is a symbol, then as $\func{ind}(S)=%
\func{ind}(\overline{I}_{\overline{N}})\func{ind}(N)$ [JW, Theorem 5.15],
then $\func{ind}$($\overline{I}_{\overline{N}}$) divides $p.$ Two cases are
then to be considered. If $\func{ind}(\overline{I}_{\overline{N}})=1$, then $%
\overline{I}\in \func{Br}_{p}(\overline{N}/\overline{F})=\func{Dec}(%
\overline{N}/\overline{F}),$ the last equality is Corollary \ref{dec=b}%
;hence $\overline{I}\in \func{Dec}(\overline{N}/\overline{F}),$ therefore by
[JW, Theorem 5.15], $S$ is NSR. Finally if $\func{ind}(\overline{I}_{%
\overline{N}})=p$, then by the formula $\func{ind}$($S$)=$\func{ind}$($%
\overline{I}_{\overline{N}}$)$\func{ind}$($N$), $[S:F]=[I\otimes N:F]$, thus 
$S\cong I\otimes N,$ therefore $S$ is a product of symbols$.$ Consequently, $%
D$ is isomorphic to a product of symbols, and it follows that $\func{ind}%
(D)=p^{l(D)}.$
\end{proof}

\begin{remark}
The class of fields of Proposition \ref{lind} is the best class for which
the result holds in the following sense. Number fields have transcendence
degree 0 over $%
\mathbb{Q}
,$ and in [ART], there is an example of an indecomposible division algebra
of index 8 over a Henselian valued field $L\ $with $\overline{L}=%
\mathbb{Q}
(t),$ so by [Ti$_{2},$Thm1], its length is 4. Proposition \ref{lind} is
false in general for algebra of non prime exponent as we now show.We start
by the following lemma which computes the value group of a tensor product of
symbol algebras.
\end{remark}

\begin{lemma}
\label{valgroup}Let $A:=\otimes _{k=1}^{m}A_{\omega }(a_{k},b_{k})$ a
division algebra product of symbol algebras of degree $n$ over a Henselian
field $(F,v)$ such that $char(\overline{F})\nmid n.$ If $i_{k},j_{k}$ denote
the standard generators of $A_{\omega }(a_{k},b_{k})$, then $\Gamma
_{A}/\Gamma _{F}=\langle v(i_{k})+\Gamma _{F},v(j_{k})+\Gamma
_{F}:k=1,\ldots ,m\rangle .$
\end{lemma}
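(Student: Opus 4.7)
The plan is to exploit the armature theory of Tignol--Wadsworth. First, since $A$ is a division algebra and each $A_\omega(a_k,b_k)$ embeds into $A$, each factor is itself a division algebra of degree $n$; hence $F(i_k)$ and $F(j_k)$ are degree-$n$ subfields of $A$ and the images $\bar i_k, \bar j_k$ in $A^{*}/F^{*}$ each have order $n$. Because $i_k j_k = \omega j_k i_k$ with $\omega \in F^{*}$ and elements from distinct tensor factors commute, the subgroup $\mathcal{A} \subseteq A^{*}/F^{*}$ generated by the $\bar i_k, \bar j_k$ is abelian of exponent dividing $n$, so $|\mathcal{A}| \leq n^{2m}$. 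On the other hand, the $n^{2m}$ monomials $m_{\vec r, \vec s} := \prod_k i_k^{r_k} j_k^{s_k}$ with $0 \leq r_k, s_k < n$ span $A$ over $F$ via the defining relations of the symbol factors, and $\dim_F A = n^{2m}$, so they form an $F$-basis. Therefore $|\mathcal{A}| = n^{2m} = \dim_F A$, and $\mathcal{A}$ is an armature of $A$.

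The hypothesis $\operatorname{char}(\overline{F}) \nmid n$ makes each symbol factor, hence $A$ itself, tame over the Henselian field $F$. Consequently $v$ extends uniquely to a valuation $v_A$ on $A$, and the associated graded ring $\operatorname{gr}(A)$ is a graded division algebra over $\operatorname{gr}(F)$. The central input from [TW$_2$] is that the images $\widetilde m_{\vec r, \vec s}$ of the monomials form a homogeneous $\operatorname{gr}(F)$-basis of $\operatorname{gr}(A)$; this is the main technical obstacle, and the place where the armature/gauge machinery does all the real work.

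Once this basis statement is granted, the rest is routine. Set $H = \Gamma_F + \sum_k \bigl(\mathbb{Z} v_A(i_k) + \mathbb{Z} v_A(j_k)\bigr)$. The inclusion $H \subseteq \Gamma_A$ is clear, so it remains to prove the reverse. Take $x \in A^{*}$ and expand $x = \sum c_{\vec r, \vec s}\, m_{\vec r, \vec s}$ with $c_{\vec r, \vec s} \in F$. The leading homogeneous component $\widetilde x \in \operatorname{gr}(A)_{v_A(x)}$ is nonzero; expressing it in the basis $\{\widetilde m_{\vec r, \vec s}\}$ forces some summand to have grade equal to $v_A(x)$, that is, $v(c_{\vec r, \vec s}) + v_A(m_{\vec r, \vec s}) = v_A(x)$ for some index $(\vec r, \vec s)$. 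Since $v(c_{\vec r, \vec s}) \in \Gamma_F$ and $v_A(m_{\vec r, \vec s}) = \sum_k (r_k v_A(i_k) + s_k v_A(j_k)) \in H$, we conclude $v_A(x) \in H$, which completes the proof.
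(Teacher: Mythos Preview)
Your proof is correct and follows essentially the same route as the paper: both identify the canonical armature $\mathcal{A}=\langle i_kF^{*},j_kF^{*}\rangle$ and then invoke the Tignol--Wadsworth machinery to show the armature basis is compatible with the valuation, from which the value-group statement is immediate. The only cosmetic difference is that the paper phrases this compatibility as ``the min-function $y$ over the armature basis is an $F$-gauge, hence equals $v$ by uniqueness'' (citing [TW$_1$, Prop.~4.7 and Cor.~3.4]), whereas you phrase the same fact as ``the monomials form a homogeneous $\operatorname{gr}(F)$-basis of $\operatorname{gr}(A)$''; note that the relevant reference is [TW$_1$] rather than [TW$_2$].
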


\begin{proof}
Consider the canonical armature $\mathcal{A=\langle \{}i_{k}F^{\ast
},j_{k}F^{\ast }:k=1,\ldots ,m\}\mathcal{\rangle \subseteq }A^{\ast
}/F^{\ast }.$ Then the valuation $v$ induces a group homomorphism $%
\widetilde{v}:\mathcal{A\rightarrow }\Gamma _{A}/\Gamma _{F}$ defined by $%
\widetilde{v}(cF^{\ast })=v(c)+\Gamma _{F}.$ For each $a\in \mathcal{A},$
choose in $A$ exactly one representative $x_{a}$ of the coset $a$; then $%
(x_{a})_{a\in \mathcal{A}}$ is a $F$-basis for $A$. Define $y:A\rightarrow
\triangle \cup \{\infty \}$ by $y(\sum_{a\in \mathcal{A}}\lambda _{a}x_{a})=%
\underset{a\in \mathcal{A}}{\min }(v(\lambda _{a}x_{a})).$ Then $y$ is an $F$%
-gauge [TW, Proposition 4.7], and $\Gamma _{A,y}=$ $\func{Im}(\widetilde{v}).
$ But since $y$ is an $F$-gauge, by [TW, Corollary 3.4], $y=v,$ thus $\Gamma
_{A,v}=\func{Im}(\widetilde{v})=\Gamma _{A}/\Gamma _{F}=\langle
v(i_{k})+\Gamma _{F},v(j_{k})+\Gamma _{F}:k=1,\ldots ,m\rangle $ as needed.
\end{proof}

In all that follows, because of using $i$ as a generator of a symbol, we
will denote by $\xi $ the primitive $4^{th}$ root of unity $i$ in $%
\mathbb{C}
$

\begin{proposition}
\label{factgrp}Let $k=%
\mathbb{Q}
(\xi )$ and $L=k(\sqrt{5},\sqrt{13}).$Then the factor group $\func{Br}%
_{4}(L/k){\Large /}\func{Dec}(L/k)$ is a non cyclic abelian group of
exponent 2.
\end{proposition}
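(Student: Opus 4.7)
The plan is to apply Corollary~\ref{dec=b} to identify $\func{Dec}(L/k)$ with $\func{Br}_{2}(L/k)$, and then to exhibit two $\mathbb{F}_{2}$-independent classes in the quotient $\func{Br}_{4}(L/k)/\func{Br}_{2}(L/k)$ using the fundamental local-global sequence $(S)$. Since $\mathcal{G}(L/k)\cong \mathbb{Z}_{2}\times \mathbb{Z}_{2}$ has square-free exponent $t=2$, Corollary~\ref{dec=b} yields $\func{Dec}(L/k)=\func{Br}_{2}(L/k)$. For any $D\in \func{Br}_{4}(L/k)$ one has $2D\in \func{Br}_{2}(L/k)=\func{Dec}(L/k)$, so the quotient already has exponent dividing $2$; it then suffices to produce two independent non-trivial elements.

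To do this I would first locate enough primes of $k$ at which the local degree $[L_{q}:k_{q}]$ equals $4$. Because $5,13\equiv 1\pmod{4}$, each of these rational primes splits in $k=\mathbb{Q}(\xi)$, giving two primes $q_{1},q_{2}$ of $k$ above $5$ and two primes $q_{3},q_{4}$ of $k$ above $13$, with $k_{q_{j}}\cong \mathbb{Q}_{p}$ (for $p=5$ or $13$, respectively). At each such $q_{j}$ one of $\sqrt{5},\sqrt{13}$ is a uniformizer while the other generates the unramified quadratic extension (using that $13$ is a non-square mod $5$ and $5$ is a non-square mod $13$), so $[L_{q_{j}}:k_{q_{j}}]=4$ for $j=1,2,3,4$.

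By exactness of $(S)$ I then produce classes $D_{1},D_{2}\in \func{Br}(k)$ by prescribing their local invariants: $D_{1}$ has $\func{Inv}_{q_{1}}=\tfrac{1}{4}+\mathbb{Z}$, $\func{Inv}_{q_{2}}=\tfrac{3}{4}+\mathbb{Z}$, and invariant $0$ at all other primes (the sum is $0$), while $D_{2}$ has $\func{Inv}_{q_{3}}=\tfrac{1}{4}+\mathbb{Z}$, $\func{Inv}_{q_{4}}=\tfrac{3}{4}+\mathbb{Z}$, and $0$ elsewhere. Each $D_{i}$ lies in $\func{Br}_{4}(L/k)$: its invariants have order dividing $4$, and at every prime $q$ where $(D_{i})_{q}\not\sim 1$ one has $[L_{q}:k_{q}]=4$, so $\exp((D_{i})_{q})$ divides $[L_{q}:k_{q}]$ and hence $D_{i}\in \func{Br}(L/k)$. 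Finally, each of $D_{1}$, $D_{2}$ and $D_{1}+D_{2}$ carries an invariant of order exactly $4$ (at $q_{1}$, $q_{3}$ and $q_{1}$ respectively), so none of them lies in $\func{Br}_{2}(L/k)=\func{Dec}(L/k)$. Hence $D_{1}$ and $D_{2}$ span a copy of $(\mathbb{Z}/2)^{2}$ inside the quotient, which is therefore non-cyclic of exponent $2$.

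The only real obstacle is the bookkeeping needed to check that the concocted local data come from a class that genuinely lies in $\func{Br}_{4}(L/k)$ (both split by $L$ and annihilated by $4$); here this is essentially automatic because the supports of the invariants are chosen inside the set of primes with $[L_{q}:k_{q}]=4$, and the two prescribed invariants at each such pair of primes are arranged to sum to $0$.
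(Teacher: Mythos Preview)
Your argument is correct and follows essentially the same strategy as the paper: identify $\func{Dec}(L/k)$ with $\func{Br}_{2}(L/k)$, verify that the four primes of $k$ above $5$ and $13$ have local degree $4$, and then use the exact sequence $(S)$ to manufacture two classes with order-$4$ invariants whose images in the quotient are $\mathbb{F}_{2}$-independent. The only cosmetic differences are that the paper cites Tignol's theorem rather than Corollary~\ref{dec=b} for the equality $\func{Dec}(L/k)=\func{Br}_{2}(L/k)$, and it chooses the invariant tuples $(\tfrac14,\tfrac14,\tfrac14,\tfrac14)$ and $(\tfrac14,\tfrac14,\tfrac12,0)$ instead of your disjointly supported $(\tfrac14,\tfrac34,0,0)$ and $(0,0,\tfrac14,\tfrac34)$; your choice makes the independence check slightly more transparent.
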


\begin{proof}
Note since $L$ is biquadratic, by [Ti$_{2},$Thm1], $\func{Dec}(L/k)=\func{Br}%
_{2}(L/k)$. Therefore it is obvious that $\func{Br}_{4}(L/k){\Large /}\func{%
Dec}(L/k)$ is an Abelian group of exponent 2. To show that $\func{Br}%
_{4}(L/k){\Large /}\func{Dec}(L/k)$ is non cyclic, it is enough to show that
it contains more than two elements which we now proceed to do. More
precisely, we will prove the existence of $A,B\in \func{Br}_{4}(L/k){\Large %
\backslash }\func{Dec}(L/k)$ with $A\nsim B$ $\func{mod}\func{Dec}(L/k).$
Note that since $5\equiv 1(\func{mod}4)$ the prime 5 of $%
\mathbb{Q}
$ splits into two primes $P_{1},P_{2}$ of $k,$ and similarly 13 splits into
two primes $P_{3},P_{4}$ of $k$ so that $[L_{P_{j}}:k_{P_{j}}]=4$ for \ $%
j=1,2,3,4.$ To see that these degrees are all $4$, let's simplify the
notation and denote $P_{1}$ by $P.$ Then $\sqrt{\overline{13}}\in \overline{%
L_{P}},$ so $[\overline{L_{P}}:\overline{k_{P}}]\geqslant \lbrack \overline{%
k_{P}}(\sqrt{\overline{13}}):k_{P}]=[%
\mathbb{Z}
_{5}(\sqrt{\overline{13}}):%
\mathbb{Z}
_{5}]=2.$ In addition $\frac{1}{2}+%
\mathbb{Z}
\in \Gamma _{L_{P}}/\Gamma _{k_{P}}$ (being the coset of the value of $\sqrt{%
5}$), then $[\Gamma _{L_{P}}:\Gamma _{k_{P}}]\geq 2.$ Therefore by the
fundamental inequality, $[L_{P}:k_{P}]\geq \lbrack \overline{L_{P}}:%
\overline{k_{P}}][\Gamma _{L_{P}}:\Gamma _{k_{P}}]\geq 4,$ but since $%
L_{P}=k_{P}(\sqrt{5},\sqrt{13}),$ then$[L_{P}:k_{P}]\leq 4$, thus $%
[L_{P}:k_{P}]=4$ as claimed. The proof for $P_{2}$ is identical to the one
for $P_{1},$ and the proofs for $P_{3}$\ and $P_{4}$ are similar. Now
consider $A,B\in \func{Br}(k)$ with local invariants given in the table below

\begin{tabular}{llllll}
\ \ \ \ \ \ \ \ \ \ \ \ \ \ \  & $P_{1}$ & $P_{2}$ & $P_{3}$ & $P_{4}$ & all
other primes \\ 
$A$ & $\frac{1}{4}+%
\mathbb{Z}
$ & $\frac{1}{4}+%
\mathbb{Z}
$ & $\frac{1}{4}+%
\mathbb{Z}
$ & $\frac{1}{4}+%
\mathbb{Z}
$ & $%
\mathbb{Z}
$ \\ 
$B$ & $\frac{1}{4}+%
\mathbb{Z}
$ & $\frac{1}{4}+%
\mathbb{Z}
$ & $\frac{1}{2}+%
\mathbb{Z}
$ & $%
\mathbb{Z}
$ & $%
\mathbb{Z}
$%
\end{tabular}

Therefore, $\func{ind}(A)=\func{ind}(B)=\exp (A)=\exp (B)=4,$ hence as $%
\func{Dec}(L/k)=\func{Br}_{2}(L/k)$ and $[L_{P_{j}}:k_{P_{j}}]=4$ \ $%
j=1,2,3,4,$ then $A,B\in \func{Br}_{4}(L/k){\Large \backslash }\func{Dec}%
(L/k).$Finally$,$ we assert that $A\nsim B$ $\func{mod}\func{Dec}(L/k)$, for
otherwise there exists $D^{\prime }\in \func{Dec}(L/k)$ with $A\sim B\otimes
D^{\prime }.$ But this would imply that $D^{\prime }$ has local invariants $%
\frac{3}{4}+$ $%
\mathbb{Z}
$ at $P_{3}$, $\frac{1}{4}+%
\mathbb{Z}
$ at $P_{4}$ and $%
\mathbb{Z}
$ at all other primes, which forces $D^{\prime }$ to have index $4.$But
since $\func{ind}(D^{\prime })=$ $\exp (D^{\prime }),$ $D^{\prime }$ would
have exponent 4, which contradicts the fact that $D^{\prime }\in \func{Dec}%
(L/k)=\func{Br}_{2}(L/k)$. Therefore, $A\nsim B$ $\func{mod}\func{Dec}(L/k)$
as claimed.
\end{proof}

\begin{corollary}
\label{extd1}There exists $D_{1}\in \func{Br}_{4}(L/k)$ with $\exp (D_{1})=4$
for which there does not exist $D_{0}\in \func{Dec}(L/k)$ with $D_{1}\sim
A_{\xi }(13,5)\otimes D_{0}.$
\end{corollary}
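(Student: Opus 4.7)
The plan is to exploit Proposition~\ref{factgrp}, which exhibited two classes $[A],[B]\in\func{Br}_4(L/k)/\func{Dec}(L/k)$ that are distinct, with representatives $A,B$ of exponent $4$. Because $[A]\ne[B]$, at most one of these cosets can coincide with $[A_\xi(13,5)]$ in the quotient, so whichever of $A$ or $B$ represents a different coset from $A_\xi(13,5)$ will serve as the desired $D_1$.

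Before implementing that I would first check that $A_\xi(13,5)$ actually lies in $\func{Br}_4(L/k)$, so that the class $[A_\xi(13,5)]$ makes sense in the quotient. Its exponent divides $4$ automatically, being a symbol of degree $4$, so it remains to see that $L$ splits it. View $A_\xi(13,5)$ as the cyclic algebra $(k(\sqrt[4]{13})/k,\sigma,5)$ with $\sigma(\sqrt[4]{13})=\xi\sqrt[4]{13}$. Since $\sqrt{13}\in L$ while $\sqrt[4]{13}\notin L$, one has $L\cap k(\sqrt[4]{13})=k(\sqrt{13})$, so scalar extension to $L$ reduces this to the cyclic degree-$2$ algebra $(L(\sqrt[4]{13})/L,\sigma^2,5)$, i.e.\ a quaternion over $L$. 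This quaternion is split because $5=(\sqrt{5})^2$ is a square (hence a norm) in $L$. Therefore $A_\xi(13,5)\otimes_k L\sim 1$, and $[A_\xi(13,5)]$ is a well-defined element of $\func{Br}_4(L/k)/\func{Dec}(L/k)$.

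Now pick $D_1\in\{A,B\}$ with $[D_1]\ne[A_\xi(13,5)]$, which is possible precisely because $[A]\ne[B]$. By Proposition~\ref{factgrp} we have $D_1\in\func{Br}_4(L/k)$ with $\exp(D_1)=4$. If there were $D_0\in\func{Dec}(L/k)$ satisfying $D_1\sim A_\xi(13,5)\otimes D_0$, then $[D_1]=[A_\xi(13,5)]$ in the quotient $\func{Br}_4(L/k)/\func{Dec}(L/k)$, contradicting the choice of $D_1$. The argument is essentially a formal consequence of the ``non-cyclic'' content of Proposition~\ref{factgrp}; the only substantive step is the preliminary check that $L$ splits $A_\xi(13,5)$, which I expect to be the only place requiring care.
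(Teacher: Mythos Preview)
Your argument is correct and is essentially the paper's own proof: since $[A]\neq[B]$ in $\func{Br}_4(L/k)/\func{Dec}(L/k)$, at most one of them can be congruent to $A_\xi(13,5)$, so the other serves as $D_1$. Your extra verification that $A_\xi(13,5)\in\func{Br}_4(L/k)$ is a nice sanity check but is not actually needed for the logic---if both $A$ and $B$ were of the form $A_\xi(13,5)\otimes D_0$ with $D_0\in\func{Dec}(L/k)$, then $A\otimes B^{-1}\in\func{Dec}(L/k)$ regardless of where $A_\xi(13,5)$ lives, already contradicting $[A]\neq[B]$.
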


\begin{proof}
At least one of the algebras $A$ and $B$ of the Proof of Proposition \ref%
{factgrp} will do as they couldn't both be congruent to $A_{\xi }(13,5)$ mod 
$\func{Dec}(L/k)$. \ \ \ \ \ \ \ \ \ \ \ \ \ \ \ \ \ \ \ \ \ \ \ \ \ \ \ \ \ 
\end{proof}

For what follows, $D_{1}$ will denote the algebra from Corollary \ref{extd1}
and $F=k((x))((y))$ the field of Laurent series equipped with the usual
Henselian valuation $v$ so that $\Gamma _{F}=%
\mathbb{Z}
\times 
\mathbb{Z}
$ and $\overline{F}=k.$ Let $I$ the inertial lift of $D_{1}$ over $F;$ and $D
$ the underlying division algebra of $I\otimes _{F}N$ with $N$ denoting the
NSR division algebra $(5,x)\otimes _{F}(13,y).$ Then $\Gamma _{N}=\frac{1}{2}%
\mathbb{Z}
\times \frac{1}{2}%
\mathbb{Z}
$ and $\overline{N}=L.$ Note that by [JW, Proposition 5.15] and the
definition of $D_{1}$ (See Corollary \ref{extd1}). $\overline{I}$ is split
by $\overline{N}=L\cong \overline{D}$ (so $D$ is semiramified), $\Gamma
_{D}=\Gamma _{N}=\frac{1}{2}%
\mathbb{Z}
\times \frac{1}{2}%
\mathbb{Z}
$ and $\func{ind}(D)=\func{ind}(N)=4$. In addition there is a group
isomorphism $I\func{Br}(F)\longrightarrow \func{Br}(k)$ sending $I$ to $D_{1}
$ [JW, Theorem 2.8];  therefore, $\exp (I)=\exp (D_{1})=4.$ Since $\func{ind}%
(D)=4$, then $\exp (D)$ is $2$ or $4$, but since $\exp (I)=4$ and $\exp
(N)=2,$ then $\exp (D)=4.$

\begin{theorem}
\label{counterexple}The division algebra $D$ above has exponent 4 and is not
isomorphic to a product of symbol algebras.
\end{theorem}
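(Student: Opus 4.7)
Since $\exp(D) = 4$ is already established in the paragraph preceding the theorem, it remains to rule out $D$ being isomorphic to a tensor product of symbol algebras of exponent dividing $4$. A degree count ($\deg D = 4$) restricts any such decomposition to one of two shapes: (a) two quaternion factors $(a_1, b_1) \otimes (a_2, b_2)$, or (b) a single degree-$4$ symbol $A_\xi(a, b)$. Case (a) is immediate: a tensor product of quaternions has exponent dividing $2$, contradicting $\exp(D) = 4$.

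For case (b), suppose $D \cong A_\xi(a, b)$ with generators $i, j$ satisfying $i^4 = a$ and $j^4 = b$. Lemma~\ref{valgroup} gives $\Gamma_D/\Gamma_F = \langle v(i)+\Gamma_F,\, v(j)+\Gamma_F\rangle$, and since $\Gamma_D/\Gamma_F = (\mathbb{Z}/2\mathbb{Z})^2$ is $2$-torsion, I deduce $v(a), v(b) \in 2\Gamma_F$ with images $v(a)/2+\Gamma_F,\, v(b)/2+\Gamma_F$ spanning $\Gamma_F/2\Gamma_F$. Since the Brauer class of $A_\xi(a, b)$ is invariant under multiplying $a, b$ by elements of $(F^*)^4$, I normalize $a = x^\alpha y^\beta u_a$ and $b = x^\gamma y^\delta u_b$ with $\alpha, \beta, \gamma, \delta \in \{0, 2\}$, units $u_a, u_b$, and the pairs $(\alpha/2, \beta/2), (\gamma/2, \delta/2)$ linearly independent in $\mathbb{F}_2^2$.

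A bimultiplicative expansion of $A_\xi(a, b)$, using $A_\xi(c, d)^2 \sim (c, d)$, $(c, c) \sim 1$ (valid because $-1 = \xi^2 \in (F^*)^2$), and the $2$-torsion of quaternion symbols, collapses to
\[
A_\xi(a, b) \sim (x,\, u_b^s u_a^p) \otimes (y,\, u_b^t u_a^q) \otimes A_\xi(u_a, u_b),
\]
where $(s, t, p, q) := (\alpha/2, \beta/2, \gamma/2, \delta/2)$. Equating with $D \sim I \otimes (x, 5) \otimes (y, 13)$ in $\func{Br}_4(F)$ and comparing the inertial, $x$-partially-ramified, $y$-partially-ramified, and fully-ramified components under the standard decomposition of $\func{Br}_4(F)$ for the $2$-dimensional Henselian field $F = k((x))((y))$ (via iterated tame symbols; cf.\ [JW], [TW]), I extract $D_1 = \overline{I} \sim A_\xi(\overline{u_a}, \overline{u_b})$ in $\func{Br}_4(k)$, and $u_b^s u_a^p \equiv 5$, $u_b^t u_a^q \equiv 13 \pmod{(k^*)^2}$. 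Linear independence then forces $\langle \overline{u_a}, \overline{u_b}\rangle = \langle 5, 13\rangle$ in $k^*/(k^*)^2$, and a check over the six admissible matrices shows $(\overline{u_a}, \overline{u_b}) \sim (13, 5)$ in $\func{Br}_2(k)$ in every case (using $(c, c) \sim 1$).

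To close, set $D_0 := D_1 \otimes A_\xi(13, 5)^{-1}$. Then $D_0^2 \sim (\overline{u_a}, \overline{u_b}) \otimes (13, 5)^{-1} \sim 1$, so $D_0 \in \func{Br}_2(k)$. Both $D_1$ (by construction) and $A_\xi(13, 5)$ are split by $L = k(\sqrt{5}, \sqrt{13})$; for the latter, writing $13 = (\sqrt{13})^2$ and $5 = (\sqrt 5)^2$ over $L$ gives $A_\xi(13, 5)_L \sim A_\xi(\sqrt{13}, \sqrt 5)^4_L \sim 1$. Hence $D_0 \in \func{Br}_2(k) \cap \func{Br}(L/k) = \func{Br}_2(L/k) = \func{Dec}(L/k)$, the last equality being the biquadratic identity already invoked in Proposition~\ref{factgrp}. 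Thus $D_1 \sim A_\xi(13, 5) \otimes D_0$ with $D_0 \in \func{Dec}(L/k)$, directly contradicting Corollary~\ref{extd1}. The step I expect to be most delicate is the third one -- extracting four separate matching conditions from a single Brauer-class identity -- which relies on the uniqueness of the Jacobson--Wadsworth-type decomposition of $\func{Br}_4(F)$, equivalently the independence of the iterated tame-symbol residue maps.
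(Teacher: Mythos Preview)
Your proof is correct and reaches the same contradiction with Corollary~\ref{extd1}, but the route differs from the paper's in two notable ways. First, the paper normalizes more aggressively: using the isomorphisms $A_{\xi}(t,s)\cong A_{\xi}(s,t^{-1})\cong A_{\xi}(t,-ts)$ it forces $v(i)+\Gamma_F=(\tfrac12,0)+\Gamma_F$ and $v(j)+\Gamma_F=(0,\tfrac12)+\Gamma_F$, hence $D\cong A_{\xi}(ax^2,by^2)$ with $a,b$ units; it then invokes the equality $\theta_D=\theta_N$ from [JW, Prop.~5.15(a)] to pin down the square classes \emph{exactly}, yielding $\overline a\equiv 13$ and $\overline b\equiv 5$ modulo squares. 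This lets the paper write $D_0=(5,\overline\alpha)\otimes(13,\overline\beta)$ explicitly. You instead keep all admissible exponent patterns $(s,t,p,q)$, extract only the weaker statement $\langle\overline{u_a},\overline{u_b}\rangle=\langle 5,13\rangle$ in $k^*/(k^*)^2$, and close the gap with the six--case check that the quaternion $(\overline{u_a},\overline{u_b})$ is always $\sim(13,5)$, followed by the squaring argument $D_0^{\,2}\sim 1$ to land in $\func{Br}_2(L/k)=\func{Dec}(L/k)$. Second, where the paper cancels $N$ directly in $\func{Br}(F)$ and then ``passes to the residue'' (citing [JW, 2.4(i), 2.5, 2.8]), you appeal to the full iterated Witt/Jacob--Wadsworth splitting of the tame part of $\func{Br}(F)$ to separate the inertial, $x$--ramified, and $y$--ramified pieces in one stroke. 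Both moves are legitimate; your version trades the $\theta_D$ machinery for a short case analysis plus the abstract identity $\func{Br}_2(L/k)=\func{Dec}(L/k)$, while the paper's version is more computational but avoids invoking the Brauer--group decomposition as a black box. One small notational slip: when you write ``$v(a)/2+\Gamma_F$ spanning $\Gamma_F/2\Gamma_F$'' you mean the image of $v(a)/2\in\Gamma_F$ in $\Gamma_F/2\Gamma_F$, not in $\Gamma_D/\Gamma_F$; and ``$u_b^su_a^p\equiv 5\pmod{(k^*)^2}$'' should strictly read $\overline{u_b}^{\,s}\overline{u_a}^{\,p}\equiv 5$, though Hensel makes the two equivalent.
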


\begin{proof}
If $D$ is isomorphic to a product of symbol algebras$,$ then as $\func{Ind}%
(D)=exp(D)=4$, then $D$ is isomorphic to a single symbol division algebra $%
A_{\xi }(t,s)$ of degree $4.$ We let $i,j$ be the standard generators$.$
First, we show that if $D\cong A_{\xi }(t,s)$, then $D\cong A_{\xi
}(ax^{2},by^{2})$ for some $v$-units $a,b$ with $\overline{a},\overline{b}%
\notin \overline{F}^{\ast 2}.$ Note that 
\begin{equation*}
\{(0,0),(0,\frac{1}{2}),(\frac{1}{2},0),(\frac{1}{2},\frac{1}{2})\}\cong (%
\frac{1}{2}%
\mathbb{Z}
\times \frac{1}{2}%
\mathbb{Z}
){\LARGE /}(%
\mathbb{Z}
\times 
\mathbb{Z}
)\cong \Gamma _{D}{\Large /}\Gamma _{F}
\end{equation*}%
Since by Lemma \ref{valgroup} $\Gamma _{D}/\Gamma _{F}=\langle v(i)+\Gamma
_{F},v(j)+\Gamma _{F}\rangle $ and $\Gamma _{D}/\Gamma _{F}$ is not cyclic,
then $v(i)+\Gamma _{F}\neq v(j)+\Gamma _{F}.$ In addition, as $A_{\xi
}(t,s)\cong A_{\xi }(s,t^{-1})\cong A_{\xi }(t,-ts),$ we can assume $%
v(i)+\Gamma _{F}=(\frac{1}{2},0)+\Gamma _{F}$ and $v(j)+\Gamma _{F}=(0,\frac{%
1}{2})+\Gamma _{F}.$ Thus, there is $\gamma \in \Gamma _{F}$ so that $v(i)=(%
\frac{1}{2},0)+\gamma ,$ hence $v(t)=(2,0)+4\gamma =v(x^{2})+4\gamma .$
Therefore, there exists $\delta ,a\in F$ with $a$ unit so that $t=a\delta
^{4}x^{2};$ and similarly, there exists $\epsilon ,b\in F$ with $b$ unit so
that $t=b\epsilon ^{4}y^{2}.$ Thus $D\cong A_{\xi }(t,s)=A_{\xi }(a\delta
^{4}x^{2},b\epsilon ^{4}y^{2})\cong A_{\xi }(ax^{2},by^{2})$ as claimed. In
addition, if either $\overline{a}$ or $\overline{b}$ was a square in $%
\overline{F}^{\ast },$ then by Hensel's lemma, $a$ or $b$ is a square in $F$%
, which implies that $A_{\xi }(ax^{2},bx^{2})$ is similar to a quaternion
algebra and violates the exponent $4$ assumption. Furthermore if $\overline{F%
}(\sqrt{\overline{a}})\mathcal{=}\overline{F}(\sqrt{\overline{b}}),$ then
there exists $\widetilde{r}\in \overline{F},$ so that $\overline{a}=%
\overline{b}\widetilde{r}^{2},$ there exists a $v$-unit $r\in F$ with $a=%
\func{Br}^{2};$ hence $D\cong A_{\xi }(ax^{2},by^{2})=A_{\xi }(\func{Br}%
^{2}x^{2},by^{2})\sim A_{\xi }(b,b)\otimes (b,y)\otimes (rx,b)\sim A_{\xi
}(b,-1)\otimes (b,y)\otimes (rx,b)$ which is exponent 2, therefore $[%
\overline{F}(\sqrt{\overline{a}},\sqrt{\overline{b}}):\overline{F}]=4.$ Next
note that $\sqrt{\overline{a}},\sqrt{\overline{b}}\in \overline{D}$ since $%
\overline{a}=\overline{i^{2}x^{-1}}^{2}$ and $\overline{b}=\overline{%
j^{2}y^{-1}}^{2}$. Then $\overline{F}(\sqrt{\overline{a}},\sqrt{\overline{b}}%
)=\overline{D}$ because $\overline{F}(\sqrt{\overline{a}},\sqrt{\overline{b}}%
)\subseteq \overline{D}$ and $[\overline{D}:\overline{F}]=$ $[\overline{F}(%
\sqrt{\overline{a}},\sqrt{\overline{b}}):\overline{F}]=4.$ Therefore $%
\overline{D}\cong \overline{F}(\sqrt{5},\sqrt{13})\cong \overline{F}(\sqrt{%
\overline{a}},\sqrt{\overline{b}}).$ But as $\theta _{D}=\theta _{N}$ by JW,
Prop. 5.15(a), it follows that $\overline{F}(\sqrt{5})=\mathcal{F(\theta }%
_{N}(\langle (0,\frac{1}{2})+%
\mathbb{Z}
\times 
\mathbb{Z}
\rangle )\mathcal{)=F(\theta }_{D}(\langle (0,\frac{1}{2})+%
\mathbb{Z}
\times 
\mathbb{Z}
\rangle )\mathcal{)=}\overline{F}(\sqrt{\overline{b}}).$ Therefore, $%
\overline{F}(\sqrt{5})\mathcal{=}\overline{F}(\sqrt{\overline{b}}),$ and
similarly $\overline{F}(\sqrt{13})\mathcal{=}\overline{F}(\sqrt{\overline{a}}%
).$ Hence, there exists $\widetilde{\alpha },\widetilde{\beta }\in \overline{%
F}$ so that $\overline{a}=13\widetilde{\alpha }^{2}$ and $\overline{b}=5%
\widetilde{\beta }^{2},$ and by Hensel's Lemma there exists $v$-units $%
\alpha ,\beta \in F$ so that $a=13\alpha ^{2}$ and $b=5\beta ^{2}.$ The
symbols manipulation yields

\begin{eqnarray*}
D &\cong &A_{\xi }(ax^{2},by^{2})=A_{\xi }(13\alpha ^{2}x^{2},5\beta
^{2}y^{2}) \\
&\sim &A_{\xi }(13,5)\otimes A_{\xi }(13,\beta ^{2})\otimes A_{\xi
}(13,x^{2})\otimes A_{\xi }(\alpha ^{2},5)\otimes A_{\xi }(x^{2},5) \\
&\sim &A_{\xi }(13,5)\otimes (5,\alpha )\otimes (13,\beta )\otimes
(5,x)\otimes (13,y) \\
&\sim &A_{\xi }(13,5)\otimes (5,\alpha )\otimes (13,\beta )\otimes N
\end{eqnarray*}

But since $D\sim I\otimes N$, it follows that $I\sim A_{\xi }(13,5)\otimes
(5,\alpha )\otimes (13,\beta )$ and passing to the residue(which can be
justified by JW, Examples 2.4(i), Proposition 2.5 and Theorem 2.8), we
obtain $D_{1}=\overline{I}\sim A_{\xi }(13,5)\otimes (5,\overline{\alpha }%
)\otimes (13,\overline{\beta })$ which contradicts Corollary \ref{extd1}.
\end{proof}

\begin{remark}
In his Ph.D thesis, F. Chang studied tame division algebras over generalized
local fields which are Henselian field with finite residue fields. He was
able to prove that any tame division algebra over such field is isomorphic
to a product of cyclic algebras. More details about these algebras can be
found in[Ch]. Based on that result, we were curious to know whether or not
we could generalized Proposition \ref{lind} to higher exponent division
algebra, but as Theorem \ref{counterexple} shows, this is not the case.
\end{remark}

\end{document}